\newtheorem{theorem}{Theorem}[section]
\newtheorem{definition}{Definition}[section]
\newtheorem{lemma}[theorem]{Lemma}
\newtheorem{corollary}[theorem]{Corollary}
\newtheorem{conjecture}{Conjecture}[section]
\newtheorem{claim}{Claim}
\newtheorem{prop}{Proposition}[section]
\title{Properly colored even cycles in edge-colored complete balanced bipartite graphs}
\author[a]{Shanshan Guo}
\author[a]{Fei Huang\thanks{Corresponding author: Fei Huang. Email: hf@zzu.edu.cn}}
\author[a]{Jinjiang Yuan}
\author[b]{C.T. Ng}
\author[b]{T.C.E. Cheng}
\affil[a]{School of Mathematics and Statistics, Zhengzhou University, Zhengzhou, Henan, People's Republic of China \authorcr \it{ Email: 15738385820@163.com; hf@zzu.edu.cn; yuanjj@zzu.edu.cn.}}
\affil[b]{Logistics Research Centre, Department of Logistics and Maritime Studies, The Hong Kong Polytechnic University, Hong Kong SAR, People's Republic of China
\authorcr \it {Email: daniel.ng@polyu.edu.hk; edwin.cheng@polyu.edu.hk.}}
\newcommand\figcaption{\def\@captype{figure}\caption}
\newcommand\tabcaption{\def\@captype{table}\caption}
\date{}\makeatother
\begin{document}
\maketitle
\begin{abstract}
Consider a complete balanced bipartite graph $K_{n,n}$ and let $K^c_{n,n}$ be an edge-colored version of $K_{n,n}$ that is obtained from $K_{n,n}$ by having each edge assigned a certain color. A subgraph $H$ of $K^c_{n,n}$ is called properly colored (PC)
if every two adjacent edges of $H$ have distinct colors. $K_{n,n}^c$ is called properly vertex-even-pancyclic
if for every vertex $u\in V(K_{n,n}^c)$ and for every even integer $k$ with $4 \leq k \leq 2n$,
there exists a PC $k$-cycle containing $u$. The minimum color degree $\delta^c(K^c_{n,n})$ of $K^c_{n,n}$ is the largest
integer $k$ such that for every vertex $v$, there are at least $k$ distinct colors on the edges incident to $v$. In this paper we study the existence of PC even cycles in $K_{n,n}^c$. We first show that, for every integer $t\geq 3$, every $K^c_{n,n}$ with $\delta^c(K^c_{n,n})\geq \frac{2n}{3}+t$ contains a PC 2-factor $H$ such that every cycle of $H$ has a length of at least $t$. By using the probabilistic method and absorbing technique, we use the above result to further show that, for every $\varepsilon>0$, there exists an integer $n_0(\varepsilon)$ such that every $K^c_{n,n}$ with $n\geq n_0(\varepsilon)$ is properly vertex-even-pancyclic, provided that $\delta^c(K^c_{n,n})\geq (\frac{2}{3}+\varepsilon)n$.\\

\noindent{\bf Keywords:} edge-coloring; properly colored cycle; properly colored $2$-factor; vertex-even-pancyclic; color degree \\

\noindent{\bf Mathematics Subject Classification (2010)} 05C15, 05C38, 05C40
\end{abstract}

\section{Introduction}

Consider a simple and finite graph $G$. We use the following notation and terminology in this paper.

\noindent$\bullet$ $V(G)$ and $E(G)$ are the sets of vertices and edges of $G$, respectively.

\noindent$\bullet$  $|G|=|V(G)|$ is the number of vertices of $G$.

\noindent$\bullet$ $H\subseteq G$ means that $H$ is a subgraph of $G$, i.e.,
$V(H)\subseteq V(G)$ and $E(H)\subseteq E(G)$.

\noindent$\bullet$ $G\setminus U$ with $U\subseteq V(G)$ is the subgraph obtained from $G$ by deleting the vertices of $U$ and the edges incident to $U$.

\noindent$\bullet$ $G\setminus H=G\setminus V(H)$ for a proper subgraph $H\subset G$.

\noindent$\bullet$ For two vertices $x,y\in V(G)$, the length of a shortest $xy$-path is called the distance between $x$ and $y$ in $G$ and denoted by $d_G(x, y)$.

\noindent$\bullet$  An edge-colored version of $G$, denoted by $G^c$, is obtained from $G$ by assigning to each edge of $G$ a color.
We also call $G^c$ an \emph{edge-colored graph} and  $c$ an \emph{edge coloring} of $G$.

\noindent$\bullet$ For each edge $e\in E(G)$, we use $c(e)$ to denote the color of edge $e$ in $G^c$.

\noindent$\bullet$
$G^c$ is called \emph{properly colored} (PC) if every two adjacent edges of $G$ receive distinct colors in $G^c$.

\noindent$\bullet$ $d^c_G(v)$ with $v\in V(G^c)$,
called the \emph{color degree} of $v$ in $G^c$, is the number of colors of the edges incident to $v$ in $G^c$.
When no confusion can occur, we shortly write
$d^c(v)$ for $d^c_G(v)$.

\noindent$\bullet$
$\delta^c(G)=\min\{d^c(v):v\in V(G^c)\}$
is called the \emph{minimum color degree} of $G^c$.

\noindent$\bullet$
$\Delta_{mon}(v)$ with $v\in V(G^c)$  denotes the maximum number of edges of the same color incident to $v$ in $G^c$.

\noindent$\bullet$
$\Delta_{mon}(G^c)$ is the maximum value of $\Delta_{mon}(v)$ over all the vertices $v\in V(G^c)$.

\noindent$\bullet$ For two vertices $x,y\in V(G)$,
we use $d_G(x,y)$ to denote the length of the shortest $xy$-path in $G$.

\noindent$\bullet$
A \emph{1-path-cycle} of $G^c$ is a vertex-disjoint union of exactly one PC path
of $G^c$ and a number of PC cycles of $G^c$. Note that every PC path of $G^c$ is a 1-path-cycle.

\noindent$\bullet$
A 1-path-cycle $H$ of $G^c$ is called a \emph{$1^{(k)}$-path-cycle}
if the length of every cycle of $H$ is at least $k$.

\noindent$\bullet$
A \emph{2-factor} $H$ of $G^c$ is a spanning 2-regular subgraph of $G^c$.

\noindent$\bullet$
A 2-factor $H$ of $G^c$ is called \emph{$2^{(k)}$-factor}
if the length of every cycle of $H$ is at least $k$.

\noindent$\bullet$
An edge-colored complete bipartite graph $K_{n,n}^c$ is called \emph{vertex-even-pancyclic}
if, for every vertex $u\in V(K_{n,n}^c)$ and
for every even integer $k$ with $4 \leq k \leq 2n$,
there exists a PC $k$-cycle containing vertex $u$ in $K_{n,n}^c$.

For more graph-theoretical terminology and notation not defined here, we refer the reader to \cite{BM}.

In the past few decades,
PC subgraphs in edge-colored graphs have been extensively studied in the literature.
In 1976, Daykin \cite{D} asked whether there exists a constant $\mu$ such that every edge-colored complete graph $K_n^c$ with $\Delta^{mon}(K_n^c)\le \mu n$ and $n\ge 3$ contains a PC Hamilton cycle. This question was answered independently by Bollob\'as and Erd\"os \cite{BE} with $\mu=1/69$, and Chen and Daykin \cite{CD} with $\mu=1/17$.
Bollob\'as and Erd\H{o}s  \cite{BE} further conjectured that if $\Delta_{mon}(K^c_
n)< \lfloor \frac{n}{2}\rceil$, then $K_n^c$ contains a PC Hamiltonian
cycle.  Cheng et al. \cite{CK} showed that every edge-colored graph $G^c$ with $\delta^c(G^c)\geq |G|/2$ has a PC spanning tree.
By Lo \cite{A}, for any $\varepsilon>0$, there exists an integer $N_0 = N_0(\varepsilon)$ such that every $K^c_{n}$  with $n\geq N_0$ and $\Delta_{mon}(K^c_{n})\leq (1/2 -\varepsilon)n $ contains a PC Hamiltonian cycle.
Barr \cite{B1998} proved that $K_n^c$ has a PC Hamilton path, provided that $K_n^c$ contains no monochromatic triangle.
Feng et al. \cite{FGGG} showed that $K_n^c$ contains a PC Hamilton path if and only if $K_n^c$ contains a spanning PC 1-path-cycle.

For the existence of PC subgraphs in edge-colored bipartite graphs, Bang-Jensen et al. \cite{BG} showed that a 2-edge-colored complete bipartite multigraph contains a PC Hamilton cycle if and only if it is color-connected and has a PC 2-factor.
Chen and Daykin \cite{CD} showed that $K^c_{n,n}$ contains a PC Hamiltonian cycle, provided $\Delta_{mon}(K_{n,n}^c)\le \frac{1}{25}n$.
By Alon and Gutin \cite{AG}, for any $\varepsilon>0$, there exists an integer $N_0 = N_0(\varepsilon)$ such that every $K^c_{n,n}$  with $n\geq N_0$  and $\Delta_{mon}(K_{n,n}^c)\le (1-\frac{1}{\sqrt{2}}-\varepsilon)n$ contains a PC cycle of length $l$ for all even $l$ with $4\le l \le 2n$.
Kano and Tsugaki \cite{KT} showed that
every edge-colored connected bipartite graph $G^c$ with $\delta^c(G^c)\geq |G|/3+1$
has a PC spanning tree.
In 2017, Fujita et al. \cite{FLZ} presented the following conjecture.

\begin{conjecture}[\cite{FLZ}]\label{conj}
If $\delta^c(K_{m,n})\geq \frac{m+n}{4}+1$, then each vertex of $K_{m,n}$ is contained in a PC $l$-cycle, where $l$ is any even integer with $4\leq l\leq \min\{2m,2n\}$.
\end{conjecture}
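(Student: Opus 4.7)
The plan is to run an induction on the cycle length $l$, with the existence of a PC 4-cycle through every vertex as the base case, a rotation-extension lemma as the generic inductive step, and a strengthened PC $2$-factor theorem to handle the Hamiltonian tail. Without loss of generality take $m\le n$ and write $d:=\lceil(m+n)/4\rceil+1$ for the color-degree threshold. Fix an arbitrary vertex $v\in V(K_{m,n})$ and an even target length $l$ with $4\le l\le 2m$.

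For the base case $l=4$, I would argue by counting. Pick two neighbors $u_1,u_2$ of $v$ with $c(vu_1)\ne c(vu_2)$; the color-degree hypothesis at $u_1$ and $u_2$ produces many candidates $x$ in $v$'s part with $c(u_1x)\ne c(vu_1)$ and $c(u_2x)\ne c(vu_2)$, and a careful pigeonhole over the $\ge d$ color classes at $u_1$ (respectively $u_2$) should force the additional constraint $c(u_1x)\ne c(u_2x)$ for at least one such $x$, unless $u_1$ or $u_2$ carries an enormous monochromatic star, which is itself incompatible with $d^c\ge d$.

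The inductive step is a detour-plus-rotation argument. Given a PC $l$-cycle $C$ through $v$ with $l<2m$, pick an edge $xy\in C$ far from $v$ and search for external vertices $x',y'\notin V(C)$ so that the three-edge replacement sub-path $x\,y'\,x'\,y$ is PC and its endpoint colors clash with neither of the $C$-edges bordering $xy$. The color-degree of $x,y,x',y'$ turns this into a tractable constraint system for most choices of $xy$. When no detour exists at any edge of $C$, rotate $C$ at $v$ by swapping the two $C$-edges at $v$ for another PC $2$-path through $v$, and retry; sustained failure should yield a large near-monochromatic star violating $d^c(\cdot)\ge d$.

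The principal obstacle is the Hamiltonian (or near-Hamiltonian) regime, where external vertices become scarce and the local detour mechanism breaks down. One would like to mimic the paper's 2-factor strategy, but its main structural theorem only guarantees PC $2^{(t)}$-factors under $\delta^c\ge 2n/3+t$, whereas the conjecture permits only $\delta^c\ge(m+n)/4+1\approx n/2+1$ in the balanced case. Bridging this gap appears to require a genuine stability theorem classifying near-extremal edge-colorings just above the conjectural threshold, together with direct combinatorial treatment of those extremal configurations to exhibit PC $2m$-cycles through $v$. This is where, in my view, any attack on the conjecture must really earn its keep, and it lies beyond the reach of the rotation-extension framework alone; a successful proof would likely need a new absorbing-type structural lemma tuned to the sharp $(m+n)/4$ threshold on both sides of the bipartition.
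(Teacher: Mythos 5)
The statement you are attempting is Conjecture~\ref{conj}, which is an \emph{open conjecture} of Fujita, Li and Zhang; the paper does not prove it and does not claim to. The paper's own contributions (Theorems~\ref{t1} and~\ref{t2}) establish vertex-even-pancyclicity only under the much stronger hypothesis $\delta^c(K^c_{n,n})\geq(\tfrac{2}{3}+\varepsilon)n$, and the only result at the conjectured threshold is Theorem~\ref{h1} from \cite{FLZ}, which handles the single case $l=4$ for balanced $K_{n,n}$. So there is no ``paper's own proof'' to compare against, and your proposal must stand or fall on its own.

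It falls, and you have essentially conceded as much in your final paragraph. Concretely: (a) your base case is, in the balanced setting, just Theorem~\ref{h1}, but for unbalanced $K_{m,n}$ the pigeonhole sketch is not an argument --- at the threshold $\delta^c\geq\frac{m+n}{4}+1$ a vertex can carry a monochromatic star of size up to roughly $\frac{3}{4}$ of its degree, so ``an enormous monochromatic star is incompatible with $d^c\geq d$'' is false as stated; (b) the inductive step rests on phrases like ``a tractable constraint system for most choices of $xy$'' and ``sustained failure should yield a large near-monochromatic star,'' neither of which is a proof, and the rotation-extension counting you gesture at is exactly what breaks down when $\delta^c$ is only about half the part sizes rather than two-thirds; (c) the Hamiltonian tail is explicitly left open, and you correctly observe that the paper's $2^{(t)}$-factor machinery requires $\delta^c\geq\frac{2n}{3}+t$ and cannot be imported at the $(m+n)/4$ level. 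What you have written is a reasonable research program outline identifying where the difficulty lies, not a proof; the conjecture remains open.
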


The authors in \cite{FLZ} also showed the following result to support Conjecture \ref{conj}.

\begin{theorem}[\cite{FLZ}]\label{h1}
If $\delta^c(K_{n,n})\geq n/2+1$, then every vertex of $K^c_{n,n}$ is contained in a PC 4-cycle.
\end{theorem}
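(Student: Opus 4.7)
The plan is to fix an arbitrary vertex $u$, say $u\in X$ for the bipartition $V(K_{n,n})=X\cup Y$ (the case $u\in Y$ being symmetric), and derive a contradiction from the assumption that no PC $4$-cycle through $u$ exists. The contradiction will come from comparing two estimates on $\sum_{x\in X\setminus\{u\}}|S_x|$, where
\[
S_x \;=\; \{\,y\in Y : c(xy)\neq c(uy)\,\}
\]
measures the color disagreement between the stars at $u$ and at $x$.

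The preliminary observation I would use is that the hypothesis $d^c(v)\geq n/2+1$ bounds every monochromatic star at a vertex $v$: if some color appears on $k$ edges at $v$, then $d^c(v)\leq (n-k)+1$, forcing $k\leq n/2$. Applied at each $y\in Y$ to the color $c(uy)$, this yields at least $n/2$ vertices $x\in X\setminus\{u\}$ with $c(xy)\neq c(uy)$. Double counting then gives the lower bound
\[
\sum_{x\in X\setminus\{u\}} |S_x| \;=\; \sum_{y\in Y}\bigl|\{x\in X\setminus\{u\}:c(xy)\neq c(uy)\}\bigr| \;\geq\; \tfrac{n^{2}}{2}.
\]

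Next I would suppose toward a contradiction that no PC $4$-cycle contains $u$ and establish the matching upper bound $|S_x|\leq n/2$ for every $x\in X\setminus\{u\}$. For any distinct $y_1,y_2\in S_x$, the $4$-cycle $u\,y_1\,x\,y_2\,u$ already satisfies $c(uy_i)\neq c(xy_i)$ for $i=1,2$ by the very definition of $S_x$, so the only way it can fail to be properly colored is that $c(uy_1)=c(uy_2)$ or $c(xy_1)=c(xy_2)$. In other words, among the pairs $\{(c(uy),c(xy)) : y\in S_x\}$, any two entries must agree in at least one coordinate. A short case analysis then shows that this collapses to either $c(u\,\cdot)$ being constant on $S_x$ or $c(x\,\cdot)$ being constant on $S_x$: one rules out the mixed configuration in which some pair of entries agrees only in the first coordinate while another pair agrees only in the second. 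Hence $S_x$ lies inside a color class at $u$ or at $x$, each of size at most $n/2$ by the preliminary observation.

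Summing the upper bound over $x$ gives $\sum_x |S_x|\leq (n-1)\cdot n/2 < n^{2}/2$, contradicting the lower bound, so a PC $4$-cycle through $u$ must exist. The only step beyond routine double counting is the little coordinate-agreement lemma, and I expect that to be the main obstacle: a careful case enumeration is needed to confirm that pairwise agreement really forces a single common first or second coordinate across the whole family, though once set up the check itself is short.
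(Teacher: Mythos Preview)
The paper does not prove this theorem; it is quoted from Fujita--Li--Zhang and used later as a black box. So there is no in-paper argument to compare against.

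Your proposal is correct as it stands. The lower bound $\sum_{x}|S_x|\ge n^{2}/2$ is exactly as you describe: any color class at $y$ has size at most $n-\delta^c(y)+1\le n/2$, and one of the edges in the class of $c(uy)$ is $uy$ itself, leaving at most $n/2-1$ other $x$'s with $c(xy)=c(uy)$ and hence at least $n/2$ with $c(xy)\neq c(uy)$. For the upper bound, the ``coordinate-agreement lemma'' you single out is a two-line check rather than an obstacle: if some $y_1,y_2\in S_x$ have $c(uy_1)\neq c(uy_2)$, then $c(xy_1)=c(xy_2)=:b^*$; now any $y_3\in S_x$ with $c(xy_3)\neq b^*$ would, by comparison with $y_1$ and with $y_2$, need $c(uy_3)=c(uy_1)$ and $c(uy_3)=c(uy_2)$ simultaneously, which is impossible. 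Hence either $c(u\,\cdot)$ or $c(x\,\cdot)$ is constant on $S_x$, giving $|S_x|\le n/2$ and $\sum_x|S_x|\le (n-1)n/2<n^{2}/2$, the desired contradiction.
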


Recently, about the existence of PC $2$-factor in $K^c_{n,n}$, Guo et al. \cite{GHY} presented the following result.

\begin{theorem}[\cite{GHY}]\label{t3}
$K^c_{n,n}$ has a PC 2-factor, provided that $\delta^c(K^c_{n,n})> \frac{3n}{4}$.
\end{theorem}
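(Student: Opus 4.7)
My plan is to reduce the construction of a PC 2-factor in $K^c_{n,n}$ to finding two edge-disjoint perfect matchings with compatible colors at every vertex, and then to obtain the second matching from the first by Hall's theorem on a carefully defined auxiliary graph. Any 2-factor of $K_{n,n}$ is the edge-disjoint union $M_1 \cup M_2$ of two perfect matchings, and it is PC exactly when $c(vM_1(v)) \neq c(vM_2(v))$ at every vertex $v$; so the task is to build such a pair.

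First I would extract a monochromatic-degree bound from $\delta^c > 3n/4$: each vertex $v$ has $n$ incident edges spreading over more than $3n/4$ colors, so pigeonhole yields $\Delta_{mon}(v) \leq n - d^c(v) + 1 \leq \lfloor n/4 \rfloor$. Next, fixing any perfect matching $M_1$ of $K^c_{n,n}$ (automatically PC), I define $G'$ to be the subgraph of $K^c_{n,n} - M_1$ obtained by deleting every edge $vu$ satisfying $c(vu) = c(vM_1(v))$ or $c(vu) = c(uM_1(u))$, so that any perfect matching of $G'$ serves as $M_2$. A direct count of edges deleted at $v$ gives
\[
d_{G'}(v) \geq n - \Delta_{mon}(v) - \beta(v) \geq \tfrac{3n}{4} - \beta(v),
\]
where $\beta(v) = |\{u \in N(v) \setminus \{M_1(v)\} : c(vu) = c(uM_1(u))\}|$ counts the opposite-endpoint color conflicts. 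If $M_1$ can be chosen so that $\beta(v) \leq n/4$ for every $v$, then $\delta(G') \geq n/2$, and a standard Hall's-theorem argument in the balanced bipartite graph $G'$ supplies $M_2$.

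The main obstacle is arranging $\beta(v) \leq n/4$ at every vertex simultaneously rather than merely on average. My approach is probabilistic: sample $M_1$ uniformly among perfect matchings of $K_{n,n}$, and use $\Pr[c(uM_1(u)) = c(vu)] \leq \Delta_{mon}(u)/n \leq 1/4$ to obtain $\mathbb{E}[\beta(v)] \leq n/4 - 1$. A concentration estimate for functions of a uniformly random matching (via bounded differences on the underlying permutation, where each transposition moves $\beta(v)$ by $O(1)$), combined with a union bound over the $2n$ vertices, would then produce an $M_1$ with $\beta(v) \leq n/4$ everywhere, at least when $n$ is large; remaining small $n$ could be handled by direct inspection. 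The tightness of the constant $3/4$ reflects the pigeonhole identity $\Delta_{mon}(v) \leq n - d^c(v) + 1$, which converts the color-degree threshold into precisely the monochromatic-degree bound needed to close the argument.
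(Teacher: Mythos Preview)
Your reformulation is sound: a 2-factor of $K_{n,n}$ decomposes into two edge-disjoint perfect matchings, and the PC condition is exactly that $c(vM_1(v))\neq c(vM_2(v))$ at every vertex. The degree count for $G'$ is also correct, giving $d_{G'}(v)\ge n-\Delta_{mon}(v)-\beta(v)>3n/4-1-\beta(v)$, so you need $\beta(v)\le n/4-1$ (approximately) for the Hall argument to go through.

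The genuine gap is in the concentration step. In the extremal regime where $\Delta_{mon}(u)$ is close to $n/4$ for every $u$, your own computation gives $\mathbb{E}[\beta(v)]\approx n/4-1$, i.e.\ the expectation sits essentially \emph{at} the threshold you must not exceed. A bounded-differences inequality for random permutations yields $\Pr[\beta(v)-\mathbb{E}\beta(v)\ge t]\le \exp(-ct^2/n)$, so to beat a union bound over $2n$ vertices you would need $t\gg\sqrt{n\log n}$, whereas the available slack is $O(1)$. Put differently, $\beta(v)$ is a sum of roughly $n$ indicators each of probability about $1/4$, so its standard deviation is $\Theta(\sqrt{n})$; there is no way to force all $2n$ of these sums simultaneously within $O(1)$ of their means. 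The appeal to ``small $n$ by direct inspection'' does not rescue this, because the failure is for all large $n$, not just a finite initial segment. The argument as written would only establish the theorem under a hypothesis like $\delta^c(K^c_{n,n})\ge 3n/4+\omega(\sqrt{n\log n})$.

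For context, Theorem~\ref{t3} is quoted here from \cite{GHY} rather than proved in this paper, and the method there (as described in the introduction) is an extremal augmenting argument: one maintains a union of vertex-disjoint PC cycles and repeatedly enlarges it by absorbing two new vertices, using the $\Delta_{mon}$ bound to guarantee the required recolored extensions. That approach avoids any global probabilistic step and works directly at the sharp threshold $3n/4$.
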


For more results on PC cycles and paths in edge-colored graphs, we recommend \cite{BG1, BGY, BE, COY, CHY, CL, CH, G, H, LNZ, LW, A2, A1, X}, and Chapter 16 of \cite{BG}.

In this paper we study the existence of PC $2$-factors in $K^c_{n,n}$ and the vertex-even-pancyclicity of $K^c_{n,n}$.
Our main results are the following Theorems \ref{t1} and \ref{t2}.

\begin{theorem}\label{t1}
Let $t\geq 3$ be an integer.
Every $K^c_{n,n}$ with $\delta^c(K^c_{n,n})\geq \frac{2n}{3}+t$ contains a PC 2-factor $H$ such that every cycle of $H$ has a length of at least $t$.
\end{theorem}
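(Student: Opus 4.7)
The plan is to take an extremal PC $1^{(t)}$-path-cycle $H$ of $K^c_{n,n}$, chosen first to maximize $|V(H)|$ and, among such $H$, to maximize the length of its path component $P=v_1v_2\cdots v_p$ (with the path allowed to be trivial or empty). Suppose for contradiction that $H$ is not a PC $2^{(t)}$-factor; then either $V(H)\subsetneq V(K^c_{n,n})$, or $H$ spans $K^c_{n,n}$ but still contains a nontrivial path that refuses to close into a PC cycle of length at least $t$.

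\textbf{Step 1: $H$ is spanning.} Assume a vertex $u\notin V(H)$ with $u$ in part $A$ of the bipartition $(A,B)$, and denote the cycle components of $H$ by $C_1,\ldots,C_k$. Since $u$ is joined in $K^c_{n,n}$ to every vertex of $B$ and $d^c(u)\geq \tfrac{2n}{3}+t$, the edges from $u$ to $B\cap V(H)$ exhibit at least $\tfrac{2n}{3}+t$ distinct colors. I would consider three families of modifications: (i) a direct extension of $P$ at an endpoint lying in $B$, blocked by only the color of the adjacent path edge; (ii) a P\'osa-type rotation of $P$, using an edge $uv_j$ with $v_j\in B\cap V(P)$ to form the new path $v_1\cdots v_j u$, and closing the discarded segment $v_{j+1}\cdots v_p$ (possibly together with part of some $C_i$) into a new PC cycle of length at least $t$; (iii) absorption of $u$ into some $C_i$ via two differently colored edges from $u$ to $B\cap V(C_i)$, splicing $u$ in place of a suitable subpath of $C_i$. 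A pigeonhole count bounds the total number of colors that could be blocked, both by the PC condition and by the length-$\geq t$ requirement on the resulting cycles; the hypothesis $d^c(u)\geq \tfrac{2n}{3}+t$ beats this bound, so some modification succeeds and contradicts the maximality of $|V(H)|$.

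\textbf{Step 2: $P$ is trivial.} If $V(H)=V(K^c_{n,n})$ but $p\geq 2$, apply analogous rotation and absorption operations at the endpoints $v_1$ and $v_p$, whose color degrees are both at least $\tfrac{2n}{3}+t$. The aim is either to close $P$ into a PC cycle of length at least $t$ (possibly after several rotations exchanging endpoints within the same part) or to merge $P$ with one of the cycles $C_i$ into a single longer PC cycle of length at least $t$, decreasing the number of components. Iterating yields a PC $2^{(t)}$-factor. Parity considerations force $p$ to be even whenever a direct closure is attempted, which is automatic once we demand that the endpoints lie in opposite parts; rotations handle the odd case.

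\textbf{Main obstacle.} The principal difficulty, and the origin of both the coefficient $\tfrac{2}{3}$ and the offset $t$, is the bookkeeping in the pigeonhole estimate. Each splicing operation forbids $O(1)$ colors through the PC condition, while the length-$\geq t$ requirement on any newly formed cycle forbids up to $t-1$ further colors (precisely those whose use would truncate a new cycle to length less than $t$). The core estimate I would need to push through is that the total number of forbidden colors is at most roughly $\tfrac{n}{3}+t-1$, so that some color among the $\tfrac{2n}{3}+t$ available at $u$ (or at $v_1,v_p$) remains usable. Carrying this count through uniformly across all possible configurations of the cycles $C_i$ and segments of $P$, while avoiding double counting and while properly exploiting the bipartite structure to restrict $u$'s neighbors to one side, is the delicate part of the argument.
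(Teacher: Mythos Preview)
Your extremal setup (maximize $|V(H)|$, then $|P|$) and the first deduction---that the colour-neighbours of $v_1,v_p$ avoiding the adjacent path-edge colour must all lie in $V(P)$, hence $|P|\ge 2\delta^c\ge \tfrac{4n}{3}+2t$---match the paper exactly. But from that point your plan has a genuine gap. Your Step~1 absorption (iii) already fails as written: in a bipartite graph you cannot splice a single vertex $u$ into an even cycle $C_i$ and still have a cycle, so ``two differently coloured edges from $u$ to $B\cap V(C_i)$'' does not produce an admissible modification. More fundamentally, the single-vertex pigeonhole you describe in the ``Main obstacle'' paragraph (forbidden colours at $u$ bounded by roughly $\tfrac{n}{3}+t-1$) is not the mechanism behind the $\tfrac{2}{3}$ threshold; a one-endpoint colour count of that shape does not close, and your sketch gives no indication of how it would.

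What you are missing is a genuinely two-endpoint device. After fixing $P=u_1\cdots u_k$, the paper assigns to each rotation target $v$ on $P$ a vertex-colour $c(v)\in\{c(vv^+),c(vv^-)\}$ (recording which adjacent path-edge the rotation edge must avoid), and then builds an auxiliary \emph{directed} bipartite graph $D$ between the $u_1$-targets in $X$ and the $u_k$-targets in $Y$, putting the arc $(x,y)$ whenever $c(xy)\ne c(x)$. The hypothesis $\delta^c\ge \tfrac{2n}{3}+t$ forces $d_D^+(x)>\tfrac12|Y^\star|$ and $d_D^+(y)>\tfrac12|X^\star|$, so the arc count exceeds $|X^\star||Y^\star|$ and $D$ must contain a directed $2$-cycle, i.e.\ a pair $x,y$ with $c(xy)\notin\{c(x),c(y)\}$. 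A nine-case analysis then converts any such pair into an explicit PC $2^{(t)}$-factor of $G[P]$ when $|P|$ is even, or into a strictly larger $1^{(t)}$-path-cycle (using one vertex $y^*\notin V(H)$) when $|P|$ is odd. This parity split is essential---odd $|P|$ places both endpoints in the same bipartition class and requires a different target set $Q_k^{(1)},Q_k^{(2)}$ shifted by two along $P$---and is entirely absent from your outline. Without the directed-$2$-cycle argument and the parity dichotomy, Steps~1 and~2 as you describe them cannot be completed.
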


\begin{theorem}\label{t2}
For every $\varepsilon>0$, there exists an integer $n_0(\varepsilon)$ such that every $K^c_{n,n}$ with $n\geq n_0(\varepsilon)$
and $\delta^c(K^c_{n,n})\geq (\frac{2}{3}+\varepsilon)n$
is vertex-even-pancyclic.
\end{theorem}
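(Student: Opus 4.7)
The plan is to prove Theorem \ref{t2} by combining the absorbing method with Theorem \ref{t1}. Fix $\varepsilon > 0$, a vertex $u \in V(K^c_{n,n})$, and an even integer $k$ with $4 \leq k \leq 2n$; the goal is to produce a PC $k$-cycle through $u$. I would split the argument according to the size of $k$.

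For short $k$, say $k \leq K_0(\varepsilon)$ for a suitable constant, I would begin with a PC $4$-cycle through $u$ (guaranteed by Theorem \ref{h1}, since our hypothesis is much stronger than $\delta^c \geq n/2 + 1$) and extend one step at a time. Given a PC $2j$-cycle through $u$ with $2j < k$, I would insert two new vertices on some edge of the current cycle to obtain a PC $(2j+2)$-cycle through $u$. Since each insertion need only dodge $O(1)$ color constraints while the number of candidate outside pairs is $\Theta(n^2)$, a direct color-degree counting produces a valid insertion at every step, yielding PC cycles of every even length in $[4, K_0]$ through $u$.

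For the bulk case $k > K_0$, I would set up an Absorbing Lemma via the probabilistic method: there is a PC path $P_{\text{abs}}$ through $u$ of bounded length $\ell_{\text{abs}}$ with endpoints $x, y$ such that for every balanced $S \subseteq V(K_{n,n}) \setminus V(P_{\text{abs}})$ of size at most $\beta n$ (for a small constant $\beta > 0$), there is a PC $xy$-path on $V(P_{\text{abs}}) \cup S$ of length $\ell_{\text{abs}} + |S|$. The proof would first use the color-degree hypothesis to show that every candidate outside pair $\{a,b\}$ (one vertex on each side) admits many short PC \emph{absorbing gadgets}, i.e.\ constant-size local modifications of a PC path that incorporate the pair; then a random selection of gadgets, controlled by Chernoff-type concentration, gives a single PC path with absorbing capacity for every such pair simultaneously.

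With $P_{\text{abs}}$ fixed I would set $G' = K^c_{n,n} \setminus V^{\circ}(P_{\text{abs}})$, whose minimum color degree remains at least $(\tfrac{2}{3}+\tfrac{\varepsilon}{2})n$ because only $O(1)$ vertices are removed. Applying Theorem \ref{t1} to $G'$ with $t$ chosen large yields a PC $2^{(t)}$-factor $F$. I would then merge the cycles of $F$ together with $P_{\text{abs}}$ into a single PC cycle through $u$ (using short PC connectors between consecutive cycles, available by color degree), of length close to $k$, and finally invoke the absorbing property of $P_{\text{abs}}$ with a tailored balanced $S$ to tune the total length exactly to $k$. The main obstacle will be the Absorbing Lemma in the bipartite PC setting: the density of absorbing gadgets for every outside pair must be established with the color-degree hypothesis controlling colors at both boundary edges of each gadget, and aggregating the gadgets into one PC path while retaining universal absorbing capacity demands delicate probabilistic concentration. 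A secondary difficulty is the cycle-merging and splicing step, where producing a single PC cycle with tight length control calls for a careful rotation-extension argument on color constraints at the splice points.
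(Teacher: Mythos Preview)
Your high-level strategy (build an absorbing structure by the probabilistic method, apply Theorem \ref{t1} on the complement, then combine) is the paper's strategy, but two steps in your outline do not work as written.

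First, the absorber cannot have size $O(1)$. You write that ``only $O(1)$ vertices are removed'' when forming $G'$, but an absorber made of $O(1)$ gadgets can absorb only $O(1)$ elements, not a set $S$ of size $\beta n$; each gadget swallows a bounded number of outside vertices, so you need $\Theta(n)$ of them. In the paper the absorber is a PC \emph{cycle} $C$ of length at most $\tfrac{2}{3}\varepsilon^2 n$, obtained by concatenating a random family $\mathcal{F}$ of $\Theta(\varepsilon^2 n)$ vertex-disjoint PC $3$-paths (Lemmas \ref{s3l1} and \ref{s3l2}) using short connectors (Lemma \ref{lem3}). Correspondingly the ``short $k$'' regime must reach up to the absorber's length, i.e.\ $\Theta(\varepsilon n)$, not a constant $K_0$; the paper covers $4 \le 2l \le \tfrac{4}{3}\varepsilon n$ by growing a PC path greedily and closing it with one application of Lemma \ref{lem3}.

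Second, and more seriously, your merging step gives no length control. Theorem \ref{t1} produces a $2^{(t)}$-factor that spans \emph{all} of $G'$; merging it with $P_{\rm abs}$ yields (at best) a Hamiltonian cycle, and since absorption only \emph{adds} vertices you have no way to reach a target $k \ll 2n$. You would have to discard parts of the $2$-factor, but discarding part of a cycle leaves a path, and then you are no longer merging cycles. The paper sidesteps the whole merging issue: from Theorem \ref{t1} it takes Corollary \ref{coro2} (break each cycle of the $2^{(t)}$-factor into a PC odd path) to cover $G \setminus C$ by at most $\lceil 3/\varepsilon \rceil$ vertex-disjoint PC odd paths, then \emph{trims} these paths so that their total length plus $|C|$ is exactly $k$, and finally absorbs each trimmed path into $C$ by replacing one $3$-path $F_i \subset C$ with a longer PC segment containing that path (Definitions \ref{S3D1}--\ref{S3D2}, Proposition \ref{p0}), assigning each path its own $F_i$ via Hall's theorem. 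This delivers exact length control without any cycle-splicing or rotation--extension argument.
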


From Theorem \ref{t1}, we have the following corollary that improves the result in Theorem \ref{t3} since the condition ``$\delta^c(K^c_{n,n})> \frac{3n}{4}$" is weakened to ``$\delta^c(K^c_{n,n})\geq \frac{2n}{3}+3$".

\begin{corollary}\label{c2}
Every $K^c_{n,n}$ with $\delta^c(K^c_{n,n})\geq \frac{2n}{3}+3$ contains a PC 2-factor.
\end{corollary}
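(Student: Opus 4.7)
My plan is to invoke Theorem \ref{t1} directly with $t=3$. The assumption $\delta^c(K^c_{n,n})\geq \frac{2n}{3}+3$ is precisely the assumption of Theorem \ref{t1} for this choice of $t$, so the theorem produces a PC $2$-factor $H$ of $K^c_{n,n}$ in which every cycle has length at least $3$. Since $K_{n,n}$ is bipartite, every cycle in any subgraph automatically has even length at least $4$, so the length condition in the conclusion of Theorem \ref{t1} is vacuous here, and $H$ is the required PC $2$-factor.

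I do not expect any genuine obstacle: the corollary is a one-line consequence of Theorem \ref{t1}, and all the substance of the argument lives in the proof of that theorem. The reason for stating Corollary \ref{c2} separately is to highlight the comparison with Theorem \ref{t3} of Guo et al. A short calculation shows $\frac{2n}{3}+3<\frac{3n}{4}$ precisely when $n>36$, so for every $n\geq 37$ the new color-degree condition is strictly weaker than $\delta^c(K^c_{n,n})>\frac{3n}{4}$; hence Corollary \ref{c2} genuinely improves Theorem \ref{t3} as soon as $n$ is even moderately large. The small cases $n\leq 36$ could be checked directly or noted as covered by Theorem \ref{t3} itself, but no separate argument is needed for the statement as given.
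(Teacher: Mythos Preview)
Your proposal is correct and matches the paper's approach: the paper simply states that Corollary~\ref{c2} follows from Theorem~\ref{t1} (with $t=3$), without giving any further argument. Your additional remarks about bipartite cycles having even length and the comparison with Theorem~\ref{t3} for $n>36$ are accurate but go slightly beyond what the paper spells out.
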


Regarding the methodology to find properly a colored $2$-factor, we deploy the proof technique that we developed in \cite{GHY}
with significant modifications. A common technique is to gradually expand a desired subgraph to a larger one.
In \cite{GHY}, the authors always add two new vertices to the current desired subgraph that is the union of some vertex-disjoint PC cycles, and then form a new larger desired subgraph. In contrast, in this paper, we mainly add a new vertex to the current desired subgraph that is a 1-path-cycle to obtain a larger one, and finally we convert a spanning 1-path-cycle into a PC 2-factor.
Thus, the desired subgraphs in the two papers are different. In addition, the authors in \cite{GHY} mainly use the value
$\Delta_{mon}(K_{n,n}^c)$ in their analysis, while we focus more on the value $\delta^c(K^c_{n,n})$ in our analysis in this paper.
After establishing Theorem \ref{t1}, we use it together with the probabilistic method to derive Theorem \ref{t2}.

\section{Proof of Theorem \ref{t1}}
Let $t\geq 3$ be an integer.
Let $G=K^c_{n,n}$ be an edge-colored complete balanced bipartite graph with bipartition $(X,Y)$ and $\delta^c(G)\geq \frac{2n}{3}+t$.
This means that $n\geq 3t$. Suppose to the contrary that $G$   contains no PC $2^{(t)}$-factor.
Let $H$ be a $1^{(t)}$-path-cycle  of $G$
and let $P$ be the unique path (as a component) of $H$.
For our purpose, we may assume that the pair
$(H,P)$ is chosen such that: (i) $|H|$ is as large as possible, and (ii) subject to (i), $|P|$ is as large as possible.
We first show that $H$ is a spanning $1^{(t)}$-path-cycle  of $G$.

Note that $P$ is a PC path of $G$ and $H\setminus P$ (if $P\neq H$) consists of some
PC even cycles of $G$, each of which has a length of at least $t$.
For the case where $|P|=1$, from the fact that
$G=K_{n,n}^c$, we have $|H|< |G|$ and there must be an edge $xy\in E(G)$ such that
$x,y\in V(G)\setminus V(H\setminus P)$.
Then $H'=xy \cup (H\setminus P)$ is a new $1^{(t)}$-path-cycle  of $G$
such that $|H'|>|H|$. This contradicts the choice of $H$.
Hence, we must have $|P|\geq 2$.

Suppose that $P:=u_1u_2 \ldots u_k$.
Let $S'_1:=\{u\in N_G(u_1):c(u_1u)\neq c(u_1u_2)\}$
and $S'_k:=\{u\in N_G(u_k):c(u_ku)\neq c(u_{k-1}u_k)\}$.
We claim that
\begin{equation}\label{S2Eq1}
S'_1,S'_k\subseteq V(P), \mbox{i.e., } S'_1\cup S'_k\subseteq V(P).\end{equation}

Suppose to the contrary that (\ref{S2Eq1}) is invalid.
Thus, at least one of $S'_1\setminus V(P)$ and $S'_k\setminus V(P)$ is nonempty.
By symmetry, we suppose $S'_k\setminus V(P)\neq \emptyset$
and let $u\in S'_k\setminus V(P)$.
If $u\in V(G\setminus H)$,
then $H+u_ku$ is a new $1^{(t)}$-path-cycle of $G$ such that $|H+u_ku|>|H|$,
contradicting the choice of $H$.
Thus, there must be a cycle $C$ of $H\setminus P$ such that $u\in V(C)$.
Since $C$ is a PC cycle, the two edges incident to $u$ in $C$
have distinct colors in $G$.
So we may assume that $C=ux_1x_2\ldots x_au$ such that $c(ux_1)\neq c(uu_k)$.
As a result, $P'=P\cup (C-x_au)= u_1u_2\ldots u_kux_1x_2\ldots x_a$ is a PC path of $G$.
Now $H'= P'\cup (H\setminus (P\cup C))$ is a new $1^{(t)}$-path-cycle of $G$
in which $P'$ is the unique path. But then $|H'|=|H|$ and $|P'|> |P|$, contradicting the maximality of $|P|$. This gives (\ref{S2Eq1}).

From (\ref{S2Eq1}), it follows that  $k=|P|\geq 2\delta^c(G)\geq \frac{4}{3}n+2t$.
Define $V'=\{u_1,u_2,\ldots,u_{t-1}\}\cup \{u_{k-1},u_{k-2},\ldots,u_{k-t+1}\}$.
Then set $S_1=S'_1\setminus V'$ and $S_2=S'_2\setminus V'$.
Now we introduce some notation used in the subsequent discussion.
Let $u$ be a vertex of $P$.
If $u=u_i$ for some $i\in\{2,3,\ldots,k\}$, then we define $u^-=u_{i-1}$.
If $u=u_i$ for some $i\in\{1,2,\ldots,k-1\}$, then we define $u^+=u_{i+1}$.
Note that $u_1^-$ and $u_k^+$ have no definitions.
For any two distinct vertices  $u_i, u_j\in V(P)$ such that
$i<j$, we write $u_i\prec u_j$ or $u_j\succ u_i$. Moreover,
we define $xP^+y=x x^+ x^{++}\ldots y$ and $yP^-x=yy^- y^{--}\ldots x$ for $x\prec y$.
Let $G[H]=G[V(H)]$ be the subgraph of $G$ induced by $V(H)$.

In the following discussion, we consider $k$ by its parity separately. We first consider the case where $k$ is even.

\begin{lemma}\label{l1}
If $k$ is even, then $G[H]$ has a PC $2^{(t)}$-factor.
\end{lemma}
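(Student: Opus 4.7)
The plan is to exhibit a PC cycle on $V(P)$ of length $k$; together with the PC cycles of $H\setminus P$ (each already of length at least $t$), this produces the required PC $2^{(t)}$-factor of $G[H]$. Because $k$ is even and $P$ lives in the bipartite graph $G$, the endpoints $u_1$ and $u_k$ lie in opposite parts, so $u_1u_k\in E(G)$, and the easy subcase is to close $P$ directly by this edge whenever $c(u_1u_k)\notin\{c(u_1u_2),c(u_{k-1}u_k)\}$. Otherwise I resort to chord surgery.

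For an even index $i$ with $t+1\leq i\leq k-t$, let
\[
C_i := u_1 u_2\cdots u_{i-1} u_k u_{k-1}\cdots u_i u_1
\]
be obtained from $P$ by deleting the edge $u_{i-1}u_i$ and inserting the two chords $u_1u_i$ and $u_{i-1}u_k$. Then $C_i$ has length $k$, and is PC iff: (a)~$u_i\in S_1$, (b)~$u_{i-1}\in S_2$, (c)~$c(u_1u_i)\neq c(u_iu_{i+1})$, and (d)~$c(u_{i-1}u_k)\neq c(u_{i-2}u_{i-1})$. The first step is a pigeonhole count for (a)+(b): from $|S'_1|,|S'_k|\geq \delta^c(G)-1$ and $|V'|=2t-2$ one gets $|S_1|,|S_2|\geq \tfrac{2n}{3}-t+1$, and both $\{i:u_i\in S_1\}$ and $\{j+1:u_j\in S_2\}$ lie in the even integers of $[t,k-t+1]$ (a set of size at most $n-t+1$), so inclusion--exclusion produces at least $\tfrac{n}{3}-t+1$ candidate indices satisfying both (a) and (b).

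The second step is to find among these candidates one that also satisfies (c) and (d). The key observation is that a failure of (c) at index $i$ means $c(u_1u_i)=c(u_iu_{i+1})$, whence by PC-ness of $P$ we automatically have $c(u_1u_i)\neq c(u_{i-1}u_i)$. Using this, I switch to the alternative construction that deletes $u_iu_{i+1}$ instead of $u_{i-1}u_i$ and inserts the chords $u_1u_i$ and $u_{i+1}u_k$, producing a pair of cycles on $V(P)$ of lengths $i$ and $k-i$. This new decomposition requires the (different) colour conditions $u_{i+1}\in S_2$ and $c(u_{i+1}u_k)\neq c(u_{i+1}u_{i+2})$ to be PC. Thus failures of (c) in the single-cycle construction can frequently be absorbed by switching to the two-cycle construction, and a symmetric dichotomy handles (d); in either construction, both resulting cycles still have length at least $t$ because $i$ ranges in $[t+1,k-t]$.

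The main obstacle will be to rule out the pathological scenario in which every candidate index from the pigeonhole set simultaneously violates the relevant colour condition in \emph{both} decompositions. I plan to force a contradiction in that regime via a P\'osa-style rotation: each systematic colour coincidence at a candidate $i$ should let one either exhibit a strictly longer PC path on $V(P)$ (contradicting the maximality of $|P|$) or absorb a vertex from some cycle of $H\setminus P$ into the path (contradicting the maximality of $|H|$). Making this rotation argument quantitatively work by leveraging the full strength of $\delta^c(G)\geq \tfrac{2n}{3}+t$ to bound the number of such colour coincidences is the delicate part of the proof.
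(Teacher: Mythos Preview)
Your outline has the right instinct (chord surgery on $P$ to produce cycles covering $V(P)$), but as written it is not a proof: the ``delicate part'' you flag at the end is in fact the whole argument, and the tools you propose for it do not work.

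Two concrete problems. First, your switching step does not absorb a failure of (c). If $c(u_1u_i)=c(u_iu_{i+1})$, then indeed $c(u_1u_i)\neq c(u_{i-1}u_i)$, but the two-cycle decomposition $u_1\cdots u_iu_1$ together with $u_{i+1}\cdots u_ku_{i+1}$ additionally requires $u_{i+1}\in S_k$ \emph{and} $c(u_{i+1}u_k)\neq c(u_{i+1}u_{i+2})$. Neither of these is implied by the pigeonhole conditions (a) and (b), which concern $u_i$ and $u_{i-1}$; so the switch trades one unverified colour condition for two. The same happens symmetrically for (d). Your pigeonhole count gives $\Omega(n)$ indices satisfying (a)+(b), but you have no mechanism forcing (c)+(d) (or their substitutes) to hold at even one of them.

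Second, the P\'osa-rotation plan cannot yield the stated contradictions. A rotation of $P$ stays on $V(P)$, so it cannot produce a strictly longer PC path on $V(P)$; and the absorption of a vertex from a cycle of $H\setminus P$ into the path is exactly what the already-established containment $S'_1,S'_k\subseteq V(P)$ forbids---and the same containment holds for every rotated path, by the same maximality of $(H,P)$. So rotations give you new endpoints and new forbidden colours, but no extra leverage against the ``pathological scenario''.

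The paper's proof is organised differently and avoids this trap. It records, for each $u\in S_1$, on which side of $u$ the colour condition $c(u_1u)\neq c(uu^\pm)$ holds, via sets $R_1^{(1)},R_1^{(2)}$ (and similarly $R_k^{(1)},R_k^{(2)}$ for $u_k$), assigns each vertex in these sets a single ``forbidden'' colour $c(x)$, and then builds an auxiliary directed bipartite graph in which the arc $(x,y)$ is present when $c(xy)\neq c(x)$. An arc count using $|R_1^{(1)}|+|R_1^{(2)}|\geq \tfrac{2}{3}n+1$ and $|R_k^{(1)}|+|R_k^{(2)}|\geq \tfrac{2}{3}n+1$ shows this digraph must contain a directed $2$-cycle, i.e.\ an edge $xy$ with $c(xy)\notin\{c(x),c(y)\}$ and $d_P(x,y)\geq t-1$ or $d_P(x,y)=1$. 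A nine-case analysis (according to where $x$ and $y$ sit among $R_1^{(1)}\setminus R_1^{(2)}$, $R_1^{(2)}\setminus R_1^{(1)}$, $R_1^{(1)}\cap R_1^{(2)}$, and the analogous sets for $k$) then exhibits, in every case, an explicit PC $2^{(t)}$-factor of $G[P]$ built from one, two, or three chords. The point is that the directed-$2$-cycle count simultaneously secures the colour conditions at \emph{both} endpoints of the chord, which is precisely what your scheme lacks.
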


\begin{proof}
Suppose to the contrary that $G[H]$ has no  PC $2^{(t)}$-factor.
Since $P$ is the unique path in the $1^{(t)}$-path-cycle $H$, $G[P]$ certainly has no PC $2^{(t)}$-factor.
For each $i\in\{1,k\}$,
set
\begin{equation}\label{S2D1}
R_i^{(1)}=\{u\in V(P):u^-\in S_i \mathrm{~and~} c(u_iu^-)\neq c(u^-u^{--})\}\end{equation}
and
\begin{equation}\label{S2D2}
R_i^{(2)}=\{u\in V(P):u^+\in S_i \mathrm{~and~} c(u_iu^+)\neq c(u^+u^{++})\}.\end{equation}
Recall from (\ref{S2Eq1}) that  $S_1, S_k\subseteq V(P)$.
%By noting that $u_k^+$ do not exist,
%we have $c(u_1u)\neq c(uu^-)$ or $c(u_1u)\neq c(uu^+)$ for any $u\in S_1\setminus\{u_k\}$.
By the definitions of $S_1$ and $S_k$, and the fact that $u_1,u_2\notin S_1'$,
we have $|S_1|\geq |S_1'|-\frac{2(t-1)-2}{2}\geq \frac{2}{3}n+1$.
In the same way, we derive $|S_k|\geq \frac{2}{3}n+1$.
Then we have
\begin{equation}\label{S2Eq4}
\left\{\begin{array}{l}
|R_1^{(1)}|+|R_1^{(2)}|\geq|S_1|\geq \frac{2}{3}n+1,\\[2mm]
|R_k^{(1)}|+|R_k^{(2)}|\geq|S_k|\geq \frac{2}{3}n+1.
\end{array}\right.
\end{equation}
Since $|P|=k$ is even and $G=K^c_{n,n}$ has bipartition $(X,Y)$,
without loss of generality, we may suppose that $u_1\in X$ and $u_k\in Y$.
From (\ref{S2D1}) and (\ref{S2D2}),  we have
\begin{equation}\label{S2Eq5}
\mbox{$R_1^{(1)}, R_1^{(2)}\subseteq X$ and $R_k^{(1)},R_k^{(2)}\subseteq Y$.}\end{equation}
Set
$$\left\{\begin{array}{ll}
X':=R_1^{(1)}\setminus R_1^{(2)},& X'':=R_1^{(2)}\setminus R_1^{(1)},\\[2mm]
Y':=R_k^{(1)}\setminus R_k^{(2)}, & Y'':=R_k^{(2)}\setminus R_k^{(1)},\\[2mm]
J_1:=R_1^{(1)}\cap R_1^{(2)}, & J_k:=R_k^{(1)}\cap R_k^{(2)},\\[2mm]
X^\star:= X'\cup X'', & Y^\star:= Y'\cup Y''.
\end{array}\right.$$
We then define a vertex coloring of the complete bipartite graph
$F:=(X^\star\cup J_1, Y^\star\cup J_k)$ in the following way.
For each vertex $u\in V(F)$, we define $c(u)$ by setting
\begin{equation}\label{c}
c(u)=\left\{
\begin{array}{ll}
c(uu^+),     &\mbox{if } {u\in X'\cup Y'},\\
c(uu^-),     &\mbox{if } {u\in X''\cup Y''},\\
c_0,   &\mbox{if } {u\in J_1\cup J_k} \ (\mbox{where $c_0$  is a new color}).
\end{array} \right.
\end{equation}
In the following we establish two useful claims.

\begin{claim}\label{c1}
For any two vertices  $x\in X^\star\cup J_1$ and $y\in Y^\star\cup J_k$,
if $d_P(x,y)\geq t-1$ or $d_P(x,y)=1$, then we have $c(xy)\in \{c(x),c(y)\}$.
As a consequence, either
$|J_1|\leq t-1$ or $|J_k|\leq t-1$.
\end{claim}

Suppose to the contrary that Claim \ref{c1} is invalid.
Then there exist two vertices $x\in X^\star\cup J_1$ and $y\in Y^\star\cup J_k$ such that $d_P(x,y)\geq t-1$ or $d_P(x,y)=1$, and $c(xy)\notin \{c(x),c(y)\}$.
Since $x\in X'\cup X''\cup J_1$ and $y\in Y'\cup Y''\cup J_k$,
we have the following nine cases to consider.

\vspace*{1mm}

\noindent{\bf Case 1.} $x\in X'$ and $y\in Y'$.
Then we have $c(u_1x^-)\neq c(x^-x^{--})$, $c(u_ky^{-})\neq c(y^{-}y^{--})$,
and $c(x)=c(xx^+)\neq c(xy)\neq c(yy^+)=c(y)$.
Furthermore, we have $xy\notin E(P)$
(for otherwise, we have $c(xy)\in \{c(xx^+),c(yy^+)\}=\{c(x),c(y)\}$, a contradiction).
If $x\prec y$, then the two cycles $C_1:=u_1P^+x^-u_1 $ and $C_2:=xP^+y^-u_kP^-yx$ are properly colored.
Since $|u_1P^+x^-|\geq t-1$ and $|y^-u_kP^-y|\geq t-1$,
we have $|C_1|\geq t$ and $|C_2|\geq t$.
In this case,
$C_1+C_2$ is a PC $2^{(t)}$-factor of $G[P]$, a contradiction.
If $x\succ y$, then the cycle $C_{3}:=u_1P^+y^-u_k P^-xyP^+x^-u_1$ with $|C_3|=|P|+1\geq t$.
In this case,
$C_3$ is
a PC $2^{(t)}$-factor of $G[P]$,
a contradiction again.

\vspace*{1mm}

\noindent{\bf Case 2.} $x\in X'$ and $y\in Y''$.
Then  $c(u_1x^-)\neq c(x^-x^{--})$, $c(u_ky^{+})\neq c(y^{+}y^{++})$,
and $c(x)=c(xx^+)\neq c(xy)\neq c(yy^-)=c(y)$.
If $xy\in E(P)$, then  we have  $x=y^+$ (for otherwise, $c(xy)=c(yy^-)=c(y)$, a contradiction). Then the two cycles $C_4:=u_1 P^+ y u_1$ and $C_5:=u_k P^- x u_k$ are properly colored.
Since $|u_1P^+y|\geq t-1$ and $|u_kP^-y|\geq t-1$,
we have $|C_4|\geq t$ and $|C_5|\geq t$.
In this case, $C_4+C_5$ is a PC $2^{(t)}$-factor of $G[P]$, a contradiction.
Hence, we have $xy\notin E(P)$.
If $x\prec y$, then the three cycles $C_{1}$,  $C_{6}:=xP^+yx$, and $C_{7}:=y^+P^+u_ky^+$ are properly colored.
Clearly, for every $i\in\{1,6,7\}$, the length of $C_i$ is at least $t$.
In this case,
$C_{1}+C_{6}+C_{7}$ is a PC $2^{(t)}$-factor of $G[P]$, a contradiction again.
If $x\succ y$, then the cycle $C_{8}:=u_1P^+yxP^+u_ky^+P^+x^-u_1$ is properly colored with $|C_8|=|P|+1>t$.
In this case, $C_{8}$ is a PC $2^{(t)}$-factor of $G[P]$, a contradiction still.

\vspace*{1mm}

\noindent{\bf Case 3.} $x\in X''$ and $y\in Y'$.
Then we have $c(u_1x^+)\neq c(x^+x^{++})$, $c(u_ky^{-})\neq c(y^{-}y^{--})$,
and $c(x)=c(xx^-)\neq c(xy)\neq c(yy^+)=c(y)$.
If $xy\in E(P)$, then we have  $x=y^-$
(for otherwise, $c(xy)=c(yy^+)=c(y)$, a contradiction).
Then the cycle $C_{9}:=u_1 P^+ x u_k P^- y u_1$  with $|C_9|=|P|+1>t$ is properly colored.
In this case, $C_9$ is a PC $2^{(t)}$-factor of $G[P]$, a contradiction.
Hence, we have $xy\notin E(P)$.
If $x\prec y$,
then the cycle $C_{10}:=u_1P^+xyP^+u_ky^-P^-x^+u_1$
is properly colored  with $|C_{10}|=|P|+1>t$.
In this case,  $C_{10}$ is a PC $2^{(t)}$-factor of $G[P]$, a contradiction again.
If $x\succ y$, then the two cycles $C_{11}:=u_1P^+y^-u_kP^-x^+u_1$ and $C_{12}:=yP^+xy$ are properly colored.
Clearly, for every $i\in\{11,12\}$, the length of $C_i$ is at least $t$.
In this case, $C_{11}+C_{12}$ is a PC $2^{(t)}$-factor of $G[P]$, a contradiction still.
\vspace*{1mm}

\noindent{\bf Case 4.} $x\in X''$ and $y\in Y''$.
Then we have $c(u_1x^+)\neq c(x^+x^{++})$, $c(u_ky^{+})\neq c(y^{+}y^{++})$,
and $c(x)=c(xx^-)\neq c(xy)\neq c(yy^-)=c(y)$.
Furthermore, we have $xy\notin E(P)$
(for otherwise, we have $c(xy)\in \{c(xx^-),c(yy^-)\}=\{c(x),c(y)\}$, a contradiction).
If $x\prec y$, then the two cycles $C_{7}$ and $C_{13}:=u_1P^+xyP^-x^+u_1$ are properly colored.
Clearly, for every $i\in\{7,13\}$, the length of $C_i$ is at least $t$.
In this case,  $C_{7}+C_{13}$ is a PC $2^{(t)}$-factor of $G[P]$, a contradiction.
If $x\succ y$, then the cycle $C_{14}:=u_1P^+yxP^+u_ky^+P^+x^-u_1$ is properly colored with $|C_{14}|=|P|+1>t$.
In this case, $C_{14}$ is a PC $2^{(t)}$-factor of $G[P]$, a contradiction again.

\vspace*{1mm}

\noindent{\bf Case 5.} $x\in X'$ and $y\in J_k$.
Then we have $c(u_1x^-)\neq c(x^-x^{--})$, $c(u_ky^{+})\neq c(y^{+}y^{++})$, $c(u_ky^{-})\neq c(y^{-}y^{--})$, and $c(x)=c(xx^+)\neq c(xy)$.
If $xy\in E(P)$, then  we have  $x=y^+$
(for otherwise, $c(xy)=c(xx^+)=c(x)$, a contradiction).
In this case, $C_4+C_5$ is a PC $2^{(t)}$-factor of $G[P]$, a contradiction.
Hence, we have $xy\notin E(P)$.
If  $c(xy)\neq c(yy^-)$,
then $C_1+C_6+C_7$ or $C_8$ is a PC $2^{(t)}$-factor of $G[P]$, a contradiction.
If  $c(xy)\neq c(yy^+)$,
then $C_1+C_2$ or $C_3$ is a PC $2^{(t)}$-factor of $G[P]$, a contradiction again.

\vspace*{1mm}

\noindent{\bf Case 6.} $x\in X''$ and $y\in J_k$.
Then we have $c(u_1x^+)\neq c(x^+x^{++})$, $c(u_ky^{+})\neq c(y^{+}y^{++})$, $c(u_ky^{-})\neq c(y^{-}y^{--})$, and $c(x)=c(xx^-)\neq c(xy)$.
If $xy\in E(P)$, then  we have  $x=y^-$
(for otherwise, $c(xy)=c(xx^-)=c(x)$, a contradiction).
In this case, $C_9$ is a PC $2^{(t)}$-factor of $G[P]$, a contradiction.
Hence, we have $xy\notin E(P)$.
If  $c(xy)\neq c(yy^-)$,
then $C_7+C_{13}$ or $C_{14}$ is a PC $2^{(t)}$-factor of $G[P]$, a contradiction.
If  $c(xy)\neq c(yy^+)$,
then $C_{10}$ or $C_{11}+C_{12}$ is a PC $2^{(t)}$-factor of $G[P]$, a contradiction again.

\vspace*{1mm}

\noindent{\bf Case 7.} $x\in J_1$ and $y\in Y'$.
Then we have $c(u_1x^{-})\neq c(x^{-}x^{--})$, $c(u_1x^{+})\neq c(x^{+}x^{++})$,
$c(u_ky^{-})\neq c(y^{-}y^{--})$,  and $c(y)=c(yy^+)\neq c(xy)$.
If $xy\in E(P)$, then we have $x=y^-$
(for otherwise, $c(xy)=c(yy^+)=c(x)$, a contradiction).
In this case, $C_9$ is a PC $2^{(t)}$-factor of $G[P]$, a contradiction.
Hence, we have $xy\notin E(P)$.
If  $c(xy)\neq c(xx^-)$,
then $C_{10}$ or $C_{11}+C_{12}$ is a PC $2^{(t)}$-factor of $G[P]$, a contradiction.
If  $c(xy)\neq c(xx^+)$,
then $C_1+C_2$ or $C_3$  is a PC $2^{(t)}$-factor of $G[P]$, a contradiction again.

\vspace*{1mm}

\noindent{\bf Case 8.} $x\in J_1$ and $y\in Y''$.
Then we have $c(u_1x^{-})\neq c(x^{-}x^{--})$, $c(u_1x^{+})\neq c(x^{+}x^{++})$,
$c(u_ky^{+})\neq c(y^{+}y^{++})$,  and $c(y)=c(yy^-)\neq c(xy)$.
If $xy\in E(P)$, then we have $x=y^+$, so $C_4+C_5$ is a PC 2-factor of $G[P]$, a contradiction.
Hence, we have $xy\notin E(P)$.
If  $c(xy)\neq c(xx^-)$,
then $C_7+C_{13}$ or $C_{14}$ is a PC $2^{(t)}$-factor of $G[P]$, a contradiction.
If  $c(xy)\neq c(xx^+)$,
then $C_1+C_6+C_7$ or $C_8$ is a PC $2^{(t)}$-factor of $G[P]$, a contradiction again.

\vspace*{1mm}

\noindent{\bf Case 9.} $x\in J_1$ and $y\in J_k$.
Suppose that $xy\in E(P)$.
If $x=y^+$, then  $C_4+C_5$ is a PC $2^{(t)}$-factor of $G[P]$, a contradiction.
If $x=y^-$, then $C_9$ is a PC $2^{(t)}$-factor of $G[P]$, a contradiction.
Thus, we have $xy\notin E(P)$.
If  $c(xx^-) \neq c(xy)\neq c(yy^-)$,
then  $C_7+C_{13}$ or $C_{14}$ is a PC $2^{(t)}$-factor of $G[P]$, a contradiction.
If  $c(xx^-) \neq c(xy)\neq c(yy^+)$,
then  $C_{10}$ or $C_{11}+C_{12}$ is a PC $2^{(t)}$-factor of $G[P]$, a contradiction.
If  $c(xx^+) \neq c(xy)\neq c(yy^-)$,
then $C_1+C_6+C_7$ or $C_8$  is a PC $2^{(t)}$-factor of $G[P]$, a contradiction.
If  $c(xx^+) \neq c(xy)\neq c(yy^+)$,
then  $C_1+C_2$ or $C_3$ is a PC $2^{(t)}$-factor of $G[P]$, a contradiction.
The claim holds.

\vspace*{1mm}

Note that $|X^\star|+2|J_1|=|R_1^{(1)}|+|R_1^{(2)}|\geq \frac{2}{3}n+1$
and $|Y^\star|+2|J_k|=|R_k^{(1)}|+|R_k^{(2)}|\geq \frac{2}{3}n+1$.
So, by $t\leq \frac{1}{3}n$,  we have
\begin{equation}\label{s2eq1}
|X^\star|\geq 1 \mathrm{~or~} |Y^\star|\geq 1.
\end{equation}

We now define a directed  bipartite graph $D$ with bipartition
$(X^\star\cup J_1, Y^\star)$ by setting the set of arcs
$A(D):= \{(x,y): x\in X^\star\cup J_1, d_P(x,y)\geq t-1 \mathrm{~or~} d_P(x,y)=1, y\in Y^\star\cup J_k, c(xy)\neq c(x)\}\cup
\{(y,x):  x\in X^\star\cup J_1, d_P(x,y)\geq t-1\mathrm{~or~}d_P(x,y)=1, y\in Y^\star\cup J_k, c(xy)\neq c(y)\}$.
By Claim \ref{c1},
we know that
$(y,x)\notin A(D)$ for every $y\in Y^\star$ and $x\in J_1$,
and
$(x,y)\notin A(D)$ for every $x\in X^\star$ and $y\in J_k$.
We further have the following claim for the digraph $D$.

\begin{claim}\label{c2}
$D$ has a directed 2-cycle or, equivalently,
there are two vertices $x$ and $y$ in $D$
such that $d_P(x,y)\geq t-1$ or $d_P(x,y)=1$,
and both $(x,y)$ and $(y,x)$ are arcs of $D$.
\end{claim}

From (\ref{S2Eq4}),
we have $2|X^\star\cup J_1|=2|X_1|+2|X_2|+2|J_1|\geq |X^\star|+|R^{(1)}_1|+|R^{(2)}_1|
\geq |X^\star|+\frac{2}{3}n+1$
and $2|Y^\star\cup J_k|=2|Y_1|+2|Y_2|+2|J_k|\geq |Y^\star|+|Q^{(1)}_k|+|Q^{(2)}_k|
\geq |Y^\star|+\frac{2}{3}n+1$.
Note that $|\{y\in V(P): 3\leq d_P(x,y)< t-1 \}|\leq t-2$ for each $x\in V(P)$.
Then we have
\begin{eqnarray*}
d_D^+(x)&=&|\{y\in N_G(x):c(xy)\neq c(x)\}\cap \{Y^\star\cup J_k\}|-(t-2)\\
          &\geq& |\{y\in N_G(x):c(xy)\neq c(x)\}|+|Y^\star|-n-(t-2)\\
          &\geq& \delta^c(G)-1+\frac{|Y^\star|+\frac{2}{3}n+1}{2}-n-(t-2)\\
          &\geq& \frac{|Y^\star|+3}{2}.
\end{eqnarray*}In the same way, we have
\begin{eqnarray*}
d_D^+(y)&=&|\{w:c(yw)\neq c(y)\}\cap (X^\star\cup J_1)|-(t-2)\\
          &\geq& d^c(x)-1+| X^\star\cup J_1|-n-(t-2)\\
          &\geq& \delta^c(G)-1+\frac{|X^\star|+\frac{2}{3}n+1}{2}-n-(t-2)\\
          &\geq& \frac{|Y^\star|+3}{2}.
\end{eqnarray*}
Since the arc from $ X^\star$ to $J_k$ does not exist, at least $|X^\star|\frac{|Y^\star|+3}{2}$ arcs are from $X^\star$ to $Y^\star$, i.e., $|A(X^\star,Y^\star)|\geq |X^\star|\frac{|Y^\star|+3}{2}$.
Since the arc from $ Y^\star$ to $J_1$ does not exist,
at least $|Y^\star|\frac{|X^\star|+3}{2}$ arcs are from $Y^\star$ to $X^\star$,
i.e., $|A(Y^\star,X^\star)|\geq |Y^\star|\frac{|X^\star|+3}{2}$.
Note that at most $|X^\star||Y^\star|$ arcs are not contained in any directed 2-cycle in $D[X^\star\cup Y^\star]$. By simple calculation,
we conclude that
$$|A(X^\star,Y^\star)|+|A(Y^\star,X^\star)|-|X^\star||Y^\star|\geq \frac{3}{2}(|X^\star|+|Y^\star|).$$
This means that at least $\frac{3}{2}(|X^\star|+|Y^\star|)$ arcs are contained in directed 2-cycles in $D[X^\star\cup Y^\star]$.
By (\ref{s2eq1}), we have $|X^\star|+|Y^\star|>0$.
The claim holds.

\vspace*{1mm}

By Claim \ref{c2}, there exist two vertices
$x\in X^\star\cup J_1$ and $y\in Y^\star\cup J_k$
such that $d_P(x,y)\geq k-1$ or $d_P(x,y)=1$
and $c(xy)\notin \{c(x),c(y)\}$,
which contradicts Claim \ref{c1}. This gives Lemma \ref{l1}.
\end{proof}

Lemma \ref{l1} can lead to a paradoxical result.
For the case where $k=|P|$ is even, from Lemma \ref{l1},
$G[H]$ has a PC $2^{(t)}$-factor $H'$.
Since $G=K_{n,n}^c$ has no PC $2^{(t)}$-factor and $H'$ consists of even cycles,
$G\setminus H'$ has at least one edge, denoted by $xy$.
But then $xy \cup H'$ is a $1^{(t)}$-path-cycle of $G$ larger than $H$,
contradicting the choice of $H$. Therefore, $k=|P|$ must be odd.
This further implies that $u_1u_k\notin E(G)$.

Let us keep the previous notation
\begin{equation}\label{s2eq7}
\left\{\begin{array}{l}
R_1^{(1)}=\{u\in V(P):u^-\in S_1 \mathrm{~and~} c(u_1u^-)\neq c(u^-u^{--})\},\\[2mm]
R_1^{(2)}=\{u\in V(P):u^+\in S_1 \mathrm{~and~} c(u_1u^+)\neq c(u^+u^{++})\}.
\end{array}\right.
\end{equation}
Moreover, we set
\begin{equation}\label{s2eq8}
\left\{\begin{array}{l}
Q_k^{(1)}=\{u\in V(P):u^{--}\in S_k \mathrm{~and~} c(u_ku^{--})\neq c(u^{--}u^{---})\},\\[2mm]
Q_k^{(2)}=\{u\in V(P):u^{++}\in S_k \mathrm{~and~} c(u_ku^{++})\neq c(u^{++}u^{+++})\}.
\end{array}\right.
\end{equation}
Recall from (\ref{S2Eq1}) that  $S_1, S_k\subseteq V(P)$.
%By noting that $u_k^+$ do not exist,
%we have $c(u_1u)\neq c(uu^-)$ or $c(u_1u)\neq c(uu^+)$ for any $u\in S_1\setminus\{u_k\}$.
By the definitions of $S_1$ and $S_k$,
we have $u_1,u_2,u_3,u_k\notin S_1'$.
This further implies that
$$|S_1|\geq |S_1'|-\lceil\frac{t-4}{2}\rceil-\lceil\frac{t-2}{2}\rceil\geq \frac{2}{3}n+1.$$
In the same way, we have  $|S_k|\geq \frac{2}{3}n+1$.

Since $u_1,u_k\notin S_1$ and $c(u_1u)\neq c(uu^-)$ or $c(u_1u)\neq c(uu^+)$ for any $u\in S_1$, we have $u^+\in R_1^{(1)}$ or $u^-\in R_1^{(2)}$ from (\ref{s2eq7}).
Since $u_1,u_2,u_{k-1},u_k\notin S_k$ and
$c(u_ku)\neq c(uu^-)$ or $c(u_ku)\neq c(uu^+)$ for any $u\in S_k$,
 we have $u^{++}\in R_1^{(1)}$ or $u^{--}\in R_1^{(2)}$ from (\ref{s2eq8}).
Thus, we have
\begin{equation}\label{S2Eq9}
\left\{\begin{array}{l}
|R_1^{(1)}|+|R_1^{(2)}|\geq|S_1|\geq \frac{2}{3}n+1,\\[2mm]
|Q_k^{(1)}|+|Q_k^{(2)}|\geq|S_k|\geq \frac{2}{3}n+1.
\end{array}\right.
\end{equation}

Note that either $u_1,u_k\in X$ or $u_1,u_k\in Y$.
By symmetry, we may suppose that $u_1,u_k\in X$.
Since $G=K^c_{n,n}$,  $k=|P|$ is odd and all the cycles in $H$ are even,
there must be some vertex $y^*\in Y\setminus V(H)$.
Since $G=K^c_{n,n}$ has bipartition $(X,Y)$,  from (\ref{s2eq7}) and  (\ref{s2eq8}),
we have
\begin{equation}
\mbox{$R_1^{(1)}\cup R_1^{(2)}\subseteq X$ and $Q_k^{(1)}\cup Q_k^{(2)}\subseteq Y$.}\end{equation}
Set
$$\left\{\begin{array}{ll}
X_1:=R_1^{(1)}\setminus R_1^{(2)},& X_2:=R_1^{(2)}\setminus R_1^{(1)},\\[2mm]
Y_1:=Q_k^{(1)}\setminus Q_k^{(2)}, & Y_2:=Q_k^{(2)}\setminus Q_k^{(1)},\\[2mm]
J':=R_1^{(1)}\cap R_1^{(2)}, & J'':=Q_k^{(1)}\cap Q_k^{(2)},\\[2mm]
X^\star:= X_1\cup X_2, & Y^\star:= Y_1\cup Y_2.
\end{array}\right.$$

We then define a vertex coloring of the complete bipartite graph
$F':=(X^\star\cup J', Y^\star\cup J'')$ in the following way.
For each vertex $u\in V(F')$, we define $c(u)$ by setting
\begin{equation}\label{c}
c(u)=\left\{
\begin{array}{ll}
c(uu^+),     &\mbox{if } {u\in X_1\cup Y_1},\\
c(uu^-),     &\mbox{if } {u\in X_2\cup Y_2},\\
c_0,   &\mbox{if } {u\in J'\cup J''} \ (\mbox{where $c_0$  is a new color}).
\end{array} \right.
\end{equation}
In the following we establish two useful claims.

\begin{claim}\label{c3}
For any two vertices  $x\in X^\star\cup J'$ and $y\in Y^\star\cup J''$,
we have $c(xy)\in \{c(x),c(y)\}$
if at least one of the following holds:

(a) $d_P(x,y)\neq 3$,

(b) $t\geq 5$ and $d_P(x,y)\geq t-1$,

(c) $d_P(x,y)=1$.

As a consequence, either $|J'|\leq t-1$ or $|J''|\leq t-1$.
\end{claim}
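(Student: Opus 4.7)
The plan is to follow the nine-case analysis used in the proof of Claim~\ref{c1}, with the key new ingredient being the vertex $y^\ast\in Y\setminus V(H)$. This vertex exists because $k=|P|$ is odd and both $u_1,u_k\in X$, so $V(P)$ contains one more vertex of $X$ than of $Y$, while every component of $H\setminus P$ is a balanced even cycle; hence $(V(G)\setminus V(H))\cap Y\neq\emptyset$. The set $V(P)\cup\{y^\ast\}$ is then balanced with respect to the bipartition, which is exactly what enables a PC $2$-factor of $G[V(P)\cup\{y^\ast\}]$ to exist in the first place.

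Suppose for contradiction that some $x\in X^\star\cup J'$ and $y\in Y^\star\cup J''$ violate the statement. Split into nine cases according to the membership of $x$ in $X_1,X_2$ or $J'$ and $y$ in $Y_1,Y_2$ or $J''$, parallel to Cases~1--9 of Claim~\ref{c1}. In each case, the colour inequalities defining $S_1,S_k,R_1^{(i)},Q_k^{(i)}$ in (\ref{s2eq7}) and (\ref{s2eq8}), combined with $c(xy)\notin\{c(x),c(y)\}$, allow us to construct a family of vertex-disjoint PC even cycles covering $V(P)\cup\{y^\ast\}$, with $y^\ast$ inserted through the edges $u_1y^\ast$ and $u_ky^\ast$ to repair the parity defect caused by $u_1u_k\notin E(G)$. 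The exclusion of $d_P(x,y)=3$, together with conditions (b) and (c), rules out precisely the short cycles of length less than $t$ that could otherwise arise from the chord $xy$. Adjoining the cycles of $H\setminus P$ yields a family $H^\ast$ of vertex-disjoint PC cycles, each of length $\geq t$, covering $V(H)\cup\{y^\ast\}$. If $H^\ast$ spans $G$, then it is a PC $2^{(t)}$-factor, contradicting the assumption; otherwise, any edge of $G$ between two vertices outside $V(H^\ast)$ (which exists in $K_{n,n}$ because both sides remain under-covered) yields a $1^{(t)}$-path-cycle strictly larger than $H$, contradicting the maximality of $|H|$.

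For the consequence, suppose $|J'|\geq t$ and $|J''|\geq t$. By (\ref{c}) every vertex of $J'\cup J''$ is assigned the new colour $c_0$, which appears on no edge of $G$, so for every $x\in J'$ and $y\in J''$ we have $c(xy)\notin\{c(x),c(y)\}$ automatically. Taking the contrapositive of the first part of the claim forces $d_P(x,y)=3$ for every such pair. But for any fixed $x\in V(P)$, at most two vertices of $V(P)$ lie at distance exactly $3$ from $x$ on $P$, namely $x^{+++}$ and $x^{---}$; hence $|J''|\leq 2<t$, a contradiction. The main obstacle will be the bookkeeping in the nine-case analysis: each case branches further according to whether $xy\in E(P)$ and the order $x\prec y$ versus $x\succ y$, and in each subcase one has to check that inserting $y^\ast$ through $u_1y^\ast$ and $u_ky^\ast$ always produces properly coloured cycles meeting the length threshold $t$, which is the subtle new point absent from the even case.
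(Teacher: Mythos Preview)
Your plan to insert $y^\ast$ into a cycle through the two edges $u_1y^\ast$ and $u_ky^\ast$ cannot work. By (\ref{S2Eq1}) we have $S'_1,S'_k\subseteq V(P)$; since $y^\ast\notin V(P)$ and $y^\ast$ is adjacent to both $u_1$ and $u_k$, this forces $c(u_1y^\ast)=c(u_1u_2)$ and $c(u_ky^\ast)=c(u_{k-1}u_k)$. Hence any cycle that uses the edge $u_1y^\ast$ together with the path segment $u_1u_2\ldots$ (or $u_ky^\ast$ together with $\ldots u_{k-1}u_k$) fails to be properly coloured at $u_1$ (respectively $u_k$). In other words, the very maximality assumption that produces $y^\ast$ also pins down the colours of $u_1y^\ast,u_ky^\ast$ in the worst possible way for your construction.

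The paper avoids this obstacle by a different mechanism, which is encoded in the definitions (\ref{s2eq8}): note that $Q_k^{(1)},Q_k^{(2)}$ are defined via $u^{--}$ and $u^{++}$ rather than $u^{-}$ and $u^{+}$. This double shift is deliberate. In each of the nine cases the chord $xy$ and the edges to $u_1,u_k$ produce PC cycles covering $V(P)\setminus\{v\}$ for a single leftover vertex $v\in\{y^{-},y^{+},x\}$; the cycles skip exactly one vertex because of the shift. Then $vy^\ast$ is a one-edge PC path with no colour constraint at all, and $H':=(H-P)+(\text{cycles})+vy^\ast$ is a $1^{(t)}$-path-cycle with $|H'|=|H|+1$, contradicting the maximality of $|H|$ directly. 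There is no need to build a $2^{(t)}$-factor on $V(P)\cup\{y^\ast\}$ or to argue about edges outside $H^\ast$. Your argument for the final consequence about $|J'|$ and $|J''|$ is fine.
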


To prove Claim \ref{c3}, we suppose that there exist two vertices $x\in X^\star\cup J'$ and $y\in Y^\star\cup J''$ such that $c(xy)\notin \{c(x),c(y)\}$. Then it suffices to show that $d_P(x,y)=3$,  or $3\leq d_P(x,y)< t-1$ and $t\geq 5$.
We divide the discussion into the following nine cases.

\vspace*{1mm}

\noindent{\bf Case 1.} $x\in X_1$ and $y\in Y_1$.
Then we have $c(u_1x^-)\neq c(x^-x^{--})$, $c(u_ky^{--})\neq c(y^{--}y^{---})$,
and $c(x)=c(xx^+)\neq c(xy)\neq c(yy^+)=c(y)$.
Furthermore, we have $xy\notin E(P)$
(for otherwise, $c(xy)\in \{c(xx^+),c(yy^+)\}=\{c(x),c(y)\}$,
a contradiction).
If $x\prec y$, then the two cycles $C_1:=u_1 P^+ x^- u_1$ and $C_2:=xyP^+u_ky^{--}P^-x$ are properly colored.
Since $|u_1P^+x^-|\geq t-1$ and $|xyP^+u_ky^{--}|\geq t-1$,
we have $|C_1|\geq t$ and $|C_2|\geq t$.
Thus, $H_1:=H-P+C_1+C_2+y^-y^*$ is a $1^{(t)}$-path-cycle of $G$ larger than $H$,
a contradiction.
If $x\succ y$, then the cycle $C_3:=u_1P^+y^{--}u_kP^-xyP^+x^-u_1$ is properly colored with $|C_3|=k-1> t$.
Thus, $H_2:=H-P+C_3+y^-y^*$ is a $1^{(t)}$-path-cycle of $G$ larger than $H$,
a contradiction.

\vspace*{1mm}

\noindent{\bf Case 2.} $x\in X_1$ and $y\in Y_2$.
Then we have $c(u_1x^-)\neq c(x^-x^{--})$, $c(u_ky^{++})\neq c(y^{++}y^{+++})$,
and $c(x)=c(xx^+)\neq c(xy)\neq c(yy^-)=c(y)$.
If $xy\in E(P)$, then  we have  $x=y^+$
(for otherwise, $c(xy)=c(yy^-)=c(y)$,
a contradiction).
Then the two cycles $C_4:=u_1 P^+ y u_1$ and $C_5:=x^+ P^+ u_k x^+$ are properly colored.
Since $|u_1P^+y|\geq t-1$ and $|y^{++} P^+ u_k|\geq t-1$,
we have $|C_4|\geq t$ and $|C_5|\geq t$.
In this case,  $H_3:=H-P+C_4+C_5+xy^*$ is a $1^{(t)}$-path-cycle of $G$ larger than $H$,
a contradiction. Hence, we must have $xy\notin E(P)$.
If $x\prec y$, then the three cycles $C_1$, $C_6:=xP^+yx$, and $C_7:=y^{++}P^+u_k y^{++}$ are properly colored.
Since $|u_1P^+y|\geq t-1$ and $|y^{++} P^+ u_k|\geq t-1$,
we have $|C_1|\geq t$ and $|C_7|\geq t$.
Thus,  $H_4:=H-P+C_1+C_6+C_7+y^+y^*$ is a $1^{(t)}$-path-cycle of $G$ larger than $H$,
unless $|C_6|< t$.
This further implies $d_P(x,y)< t-1$ and $t\geq 5$.
Since $xy\notin E(P)$, we have $3\leq d_P(x,y)< t-1$ and $t\geq 5$.
If $x\succ y$ and $d_P(x,y)\neq 3$ (this implies $y^{++}\neq x^-$),
then the cycle $C_8:=u_1 P^+ y x P^+u_k y^{++} P^+ x^- u_1$ is properly colored
with $|C_8|=k-1> t$.
Thus,  $H_5:=H-P+C_8+y^+y^*$ is a $1^{(t)}$-path-cycle of $G$ larger than $H$,
a contradiction.
Hence, we have $d_P(x,y)=3$, or $3\leq d_P(x,y)< t-1$ and $t\geq 5$.
\vspace*{1mm}

\noindent{\bf Case 3.} $x\in X_2$ and $y\in Y_1$.
Then we have $c(u_1x^+)\neq c(x^+x^{++})$, $c(u_ky^{--})\neq c(y^{--}y^{---})$,
and $c(x)=c(xx^-)\neq c(xy)\neq c(yy^+)=c(y)$.
If $xy\in E(P)$, then we have  $x=y^-$
(for otherwise, we have $c(xy)=c(yy^+)=c(y)$,
a contradiction).
Then the cycle $C_9:=u_1 P^+ y^{--}u_kP^- x^+ u_1$ is properly colored
with $|C_9|=k-1> t$.
In this case,  $H_6:=H-P+C_9+xy^*$ is a $1^{(t)}$-path-cycle of $G$ larger than $H$,
a contradiction.
Hence, we must have $xy\notin E(P)$.
If $x\prec y$ and $d_P(x,y)\neq 3$ (this implies $y^{--}\neq x^+$),
then the cycle $C_{10}:=u_1 P^+ xy P^+u_k y^{--} P^- x^+ u_1$ is properly colored
with $|C_{10}|=k-1> t$.
Thus,  $H_7:=H-P+C_{10}+y^-y^*$ is a $1^{(t)}$-path-cycle of $G$ larger than $H$,
a contradiction.
If $x\succ y$, then the two cycles $C_{11}:=u_1 P^+ y^{--} u_kP^-x^+u_1$ and $C_{12}:=yP^+xy$ are properly colored.
Clearly, the length of $C_{11}$ is at least $t$.
Thus, $H_8:=H-P+C_{11}+C_{12}+y^-y^*$ is a $1^{(t)}$-path-cycle of $G$ larger than $H$, unless $|C_{12}|< t$.
This further implies $d_P(x,y)< t-1$ and $t\geq 5$.
Since $xy\notin E(P)$, we have $3\leq d_P(x,y)< t-1$ and $t\geq 5$.
Hence, we have $d_P(y,x)=3$, or $3\leq d_P(x,y)< t-1$ and $t\geq 5$.
\vspace*{1mm}

\noindent{\bf Case 4.} $x\in X_2$ and $y\in Y_2$.
Then we have $c(u_1x^+)\neq c(x^+x^{++})$, $c(u_ky^{++})\neq c(y^{++}y^{+++})$,
and $c(x)=c(xx^-)\neq c(xy)\neq c(yy^-)=c(y)$.
Furthermore, we have $xy\notin E(P)$
(for otherwise, we have $c(xy)\in \{c(xx^-),c(yy^-)\}=\{c(x),c(y)\}$,
a contradiction).
If $x\prec y$, then the two cycles
$C_{13}:=u_1P^+xyP^-x^+u_1$ and $C_{14}:=y^{++}P^+u_k y^{++}$ are properly colored.
Clearly, we have $|C_{13}|\geq t$ and $|C_{14}|\geq t$.
Thus, $H_9:=H-P+C_{13}+C_{14}+y^+y^*$ is a $1^{(t)}$-path-cycle of $G$ larger than $H$,
a contradiction.
If $x\succ y$,
then the cycle $C_{15}:=u_1P^=yxP^-y^{++}u_kP^-x^+u_1$ is properly colored
with $|C_{15}|=k-1> t$.
Thus,  $H_{10}:=H-P+C_{15}+y^+y^*$ is a $1^{(t)}$-path-cycle of $G$ larger than $H$,
a contradiction.
\vspace*{1mm}

\noindent{\bf Case 5.} $x\in X_1$ and $y\in J''$.
Then we have $c(u_1x^-)\neq c(x^-x^{--})$, $c(u_ky^{++})\neq c(y^{++}y^{+++})$, $c(u_ky^{--})\neq c(y^{--}y^{---})$, and $c(x)=c(xx^+)\neq c(xy)$.
If $xy\in E(P)$, then  we have  $x=y^+$
(for otherwise, we have $c(xy)=c(xx^+)=c(x)$,
a contradiction).
Then  $H_3$ is a $1^{(t)}$-path-cycle of $G$ larger than $H$,
a contradiction.
If  $c(xy)\neq c(yy^-)$,
then $H_4$ or $H_5$ is a $1^{(t)}$-path-cycle of $G$ larger than $H$,
unless $3\leq d_P(x,y)< t-1$ and $t\geq 5$.
If  $c(xy)\neq c(yy^+)$,
then  $H_1$ or $H_2$ is a $1^{(t)}$-path-cycle of $G$ larger than $H$, unless $d_P(x,y)= 3$,
a contradiction.
Hence, we have $d_P(x,y)=3$, or $3\leq d_P(x,y)< t-1$ and $t\geq 5$.

\vspace*{1mm}

\noindent{\bf Case 6.} $x\in X_2$ and $y\in J''$.
Then we have $c(u_1x^-)\neq c(x^-x^{--})$, $c(u_ky^{++})\neq c(y^{++}y^{+++})$, $c(u_ky^{--})\neq c(y^{--}y^{---})$, and $c(x)=c(xx^+)\neq c(xy)$.
If $xy\in E(P)$, then  we have  $x=y^-$
(for otherwise, we have $c(xy)=c(xx^-)=c(x)$,
a contradiction).
Then  $H_6$ is a $1^{(t)}$-path-cycle of $G$ larger than $H$,
a contradiction.
If  $c(xy)\neq c(yy^-)$,
then $H_9$ or $H_{10}$ is a $1^{(t)}$-path-cycle of $G$ larger than $H$,
a contradiction.
If  $c(xy)\neq c(yy^+)$,
then $H_7$  is a $1^{(t)}$-path-cycle of $G$ larger than $H$,
unless $d_P(x,y)= 3$,
and $H_8$  is a $1^{(t)}$-path-cycle of $G$ larger than $H$,
unless $3\leq d_P(x,y)< t-1$ and $t\geq 5$.
Hence, we have $d_P(x,y)=3$, or $3\leq d_P(x,y)< t-1$ and $t\geq 5$.

\vspace*{1mm}

\noindent{\bf Case 7.} $x\in J'$ and $y\in Y_1$.
Then we have $c(u_1x^{-})\neq c(x^{-}x^{--})$, $c(u_1x^{+})\neq c(x^{+}x^{++})$,
$c(u_ky^{--})\neq c(y^{--}y^{---})$,  and $c(y)=c(yy^+)\neq c(xy)$.
If $xy\in E(P)$, then we have $x=y^-$
(for otherwise, we have $c(xy)=c(yy^+)=c(y)$,
a contradiction).
Then $H_6$ is a $1^{(t)}$-path-cycle of $G$ larger than $H$,
a contradiction.
If  $c(xy)\neq c(xx^-)$,
then $H_7$  is a $1^{(t)}$-path-cycle of $G$ larger than $H$,
unless $d_P(x,y)= 3$,
and $H_8$  is a $1^{(t)}$-path-cycle of $G$ larger than $H$,
unless $3\leq d_P(x,y)< t-1$ and $t\geq 5$.
If  $c(xy)\neq c(xx^+)$,
then $H_1$ or $H_2$ is a $1^{(t)}$-path-cycle of $G$ larger than $H$,
a contradiction.
Hence, we have $d_P(x,y)=3$, or $3\leq d_P(x,y)< t-1$ and $t\geq 5$.
\vspace*{1mm}

\noindent{\bf Case 8.} $x\in J'$ and $y\in Y_2$.
Then we have $c(u_1x^{-})\neq c(x^{-}x^{--})$, $c(u_1x^{+})\neq c(x^{+}x^{++})$,
$c(u_ky^{++})\neq c(y^{++}y^{+++})$,  and $c(y)=c(yy^-)\neq c(xy)$.
If $xy\in E(P)$, then we have $x=y^+$
(for otherwise, we have $c(xy)=c(yy^-)=c(y)$,
a contradiction).
Then  $H_3$ is a $1^{(t)}$-path-cycle of $G$ larger than $H$,
a contradiction.
If  $c(xy)\neq c(xx^-)$,
then  $H_9$ or $H_{10}$  is a $1^{(t)}$-path-cycle of $G$ larger than $H$,
unless $d_P(x,y)= 3$,
a contradiction.
If  $c(xy)\neq c(xx^+)$,
then $H_5$  is a $1^{(t)}$-path-cycle of $G$ larger than $H$,
unless $d_P(x,y)= 3$,
and $H_4$  is a $1^{(t)}$-path-cycle of $G$ larger than $H$
unless $3\leq d_P(x,y)< t-1$ and $t\geq 5$.
Hence, we have $d_P(x,y)=3$, or $3\leq d_P(x,y)< t-1$ and $t\geq 5$.

\vspace*{1mm}

\noindent{\bf Case 9.} $x\in J'$ and $y\in J''$.
Suppose first that $xy\in E(P)$.
If $x=y^+$, then  $H_3$ is a $1^{(t)}$-path-cycle of $G$ larger than $H$,
a contradiction.
If $x=y^-$, then  $H_1$ is a $1^{(t)}$-path-cycle of $G$ larger than $H$,
a contradiction.
Hence, we must have $xy\notin E(P)$.
If  $c(xx^-) \neq c(xy)\neq c(yy^-)$,
then  $H_9$ or $H_{10}$ is a $1^{(t)}$-path-cycle of $G$ larger than $H$,
a contradiction.
If  $c(xx^-) \neq c(xy)\neq c(yy^+)$,
then $H_7$  is a $1^{(t)}$-path-cycle of $G$ larger than $H$,
unless $d_P(x,y)= 3$,
and $H_8$  is a $1^{(t)}$-path-cycle of $G$ larger than $H$,
unless $3\leq d_P(x,y)< t-1$ and $t\geq 5$.
If  $c(xx^+) \neq c(xy)\neq c(yy^-)$,
then $H_5$  is a $1^{(t)}$-path-cycle of $G$ larger than $H$
unless $d_P(x,y)= 3$,
and $H_4$  is a $1^{(t)}$-path-cycle of $G$ larger than $H$
unless $3\leq d_P(x,y)< t-1$ and $t\geq 5$.
If  $c(xx^+) \neq c(xy)\neq c(yy^-)$,
then  $H_1$ or $H_{2}$ is a 1-path-cycle of $G$ larger than $H$,
a contradiction.
As a result,  we have $d_P(x,y)=3$, or $3\leq d_P(x,y)< t-1$ and $t\geq 5$.
The claim holds.
\vspace*{3mm}

Note that $|X^\star|+2|J'|=|R_1^{(1)}|+|R_1^{(2)}|\geq \frac{2}{3}n+1$
and $|Y^\star|+2|J''|=|R_k^{(1)}|+|R_k^{(2)}|\geq \frac{2}{3}n+1$.
So, by $t\leq \frac{1}{3}n$,  we have
\begin{equation}\label{s2equ2}
|X^\star|\geq 1 \mathrm{~or~} |Y^\star|\geq 1.
\end{equation}
Now we define two directed  bipartite graphs.
If $t\leq 4$,
then we define a directed  bipartite graph $D_1$ with bipartition
$( X^\star\cup J', Y^\star\cup J'')$ by setting the set of arcs
$A(D_1):= \{
(x,y): x\in X^\star\cup J', y\in Y^\star\cup J'', d_P(x,y)\neq 3, c(xy)\neq c(x)\}\cup
\{(y,x): x\in X^\star\cup J', y\in Y^\star\cup J'', d_P(x,y)\neq 3, c(xy)\neq c(y)\}.$

If $t\geq 5$,
then we define a directed  bipartite graph $D_2$ with bipartition
$( X^\star\cup J', Y^\star\cup J'')$ by setting the set of arcs
$A(D_2):= \{
(x,y): x\in X^\star\cup J', y\in Y^\star\cup J'', d_P(x,y)\geq t-1 \mathrm{~or~} d_P(x,y)=1, c(xy)\neq c(x)\}\cup
\{(y,x):  x\in X^\star\cup J', y\in Y^\star\cup J'', d_P(x,y)\geq t-1 \mathrm{~or~} d_P(x,y)=1, c(xy)\neq c(y)\}$.

Note that for any two vertices $x\in X^\star\cup J'$ and $y\in  J''$,
we have $(x,y)\notin A(D)$,
and for any two vertices $x\in  J'$ and $y\in  Y^\star\cup J''$,
we have $(y,x)\notin A(D)$.

\vspace*{1mm}

\vspace*{1mm}

\begin{claim}\label{c5}
Both $D_1$ and $D_2$ contain a directed 2-cycles or, equivalently, there are two vertices $x$ and $y$ in $D_1$
such that $d_P(x,y)\neq 3$ and both
$(x,y)$ and $(y,x)$ are arcs of $D_1$,
and there are two vertices $x'$ and $y'$ in $D_2$
such that $d_P(x',y')\geq t-1 \mathrm{~or~} d_P(x',y')=1$ and both
$(x',y')$ and $(y',x')$ are arcs of $D_2$.
\end{claim}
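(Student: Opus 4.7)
The plan is to mimic the out-degree counting argument from the proof of Claim~\ref{c2}, adapting the bookkeeping of excluded $P$-distances to the odd-$k$ setting and treating the two digraphs $D_1$ and $D_2$ in parallel. In each case I aim to show
\[
|A(X^\star, Y^\star)| + |A(Y^\star, X^\star)| > |X^\star|\cdot|Y^\star|,
\]
which forces a directed $2$-cycle between $X^\star$ and $Y^\star$, since in any bipartite orientation on $X^\star \cup Y^\star$ at most $|X^\star|\cdot|Y^\star|$ arcs can avoid every directed $2$-cycle.

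For each $x \in X^\star \cup J'$ I would start from the same three ingredients used in Claim~\ref{c2}: the color-degree bound $|\{y : c(xy) \neq c(x)\}| \geq \delta^c(G)-1 \geq \tfrac{2n}{3} + t - 1$ (which is trivial when $x \in J'$, because $c(x) = c_0$ is realized by no edge), the inequality $|Y^\star \cup J''| \geq \tfrac{1}{2}(|Y^\star| + \tfrac{2n}{3} + 1)$ derived from (\ref{S2Eq9}), and the elementary intersection inequality inside $Y$ (of size $n$). These combine to
\[
\bigl|\{y \in Y^\star \cup J'' : c(xy) \neq c(x)\}\bigr| \;\geq\; t - \tfrac{1}{2} + \tfrac{|Y^\star|}{2}.
\]
I then subtract the vertices at forbidden $P$-distance from $x$: for $D_1$ (so $t\leq 4$) only $d_P(x,y)=3$ is forbidden and costs at most $2$ vertices, while for $D_2$ (so $t\geq 5$) the forbidden set $\{y : 3\leq d_P(x,y)\leq t-2\}$ has size at most $t-2$, exactly the estimate used in Claim~\ref{c2}. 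The arithmetic then yields $d^+_{D_2}(x) \geq \tfrac{|Y^\star|+3}{2}$ for $D_2$ and $d^+_{D_1}(x) \geq \tfrac{|Y^\star|+2t-5}{2} \geq \tfrac{|Y^\star|+1}{2}$ for $D_1$. A symmetric computation gives the analogous lower bound on $d^+_{D_i}(y)$ in terms of $|X^\star|$.

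The structural observation that no arc leaves $X^\star\cup J'$ for $J''$, nor $Y^\star\cup J''$ for $J'$ (immediate from Claim~\ref{c3} together with the fact that the formal color $c_0$ is not realized by any edge of $G$), implies that every out-arc of a vertex of $X^\star$ lies in $A(X^\star, Y^\star)$ and every out-arc of a vertex of $Y^\star$ lies in $A(Y^\star, X^\star)$. Summing the above out-degree bounds over $X^\star$ and $Y^\star$ then gives
\[
|A(X^\star, Y^\star)| + |A(Y^\star, X^\star)| \;\geq\; |X^\star|\,|Y^\star| + \alpha\bigl(|X^\star| + |Y^\star|\bigr),
\]
with $\alpha = \tfrac{3}{2}$ for $D_2$, and with $\alpha \geq \tfrac{1}{2}$ for $D_1$ (equality only when $t=3$). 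The same inequality, applied when one of $|X^\star|, |Y^\star|$ were zero, would force the other to vanish too, contradicting (\ref{s2equ2}); hence both are strictly positive. Combined with the integrality of arc counts, the surplus $\alpha(|X^\star|+|Y^\star|) > 0$ promotes to an excess of at least $1$ over $|X^\star|\,|Y^\star|$, producing the desired directed $2$-cycle.

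The main delicacy I anticipate is the $t=3$ case of $D_1$, where $\alpha$ drops all the way to $\tfrac{1}{2}$ and the margin is slimmest; the argument still goes through only because the forbidden-distance count shrinks from $t-2$ to $2$ in this regime, which is precisely the motivation for splitting the odd case into the two digraphs $D_1$ and $D_2$. Once the bipartite tournament estimate is combined with this sharpened excluded-distance bound, the conclusion for both digraphs follows uniformly.
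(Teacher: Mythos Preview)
Your proposal is correct and follows essentially the same route as the paper's own proof: bound the out-degrees in $D_1$ and $D_2$ via the color-degree hypothesis, the size estimate $|Y^\star\cup J''|\geq\tfrac12(|Y^\star|+\tfrac{2n}{3}+1)$ from (\ref{S2Eq9}), and the forbidden-distance subtraction ($2$ for $D_1$, $t-2$ for $D_2$); then use the fact that out-arcs from $X^\star$ avoid $J''$ (and symmetrically) to conclude $|A(X^\star,Y^\star)|+|A(Y^\star,X^\star)|>|X^\star||Y^\star|$ once $|X^\star|+|Y^\star|>0$. Your explicit check that neither $|X^\star|$ nor $|Y^\star|$ can vanish individually is a small clarification the paper leaves implicit, and your sharper constant $\alpha=\tfrac{2t-5}{2}$ for $D_1$ is harmless slack; otherwise the arguments coincide.
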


From (\ref{S2Eq9}),
we have $2|X^\star\cup J'|=2|X_1|+2|X_2|+2|J'|= |X^\star|+|R^{(1)}_1|+|R^{(2)}_1|
\geq |X^\star|+\frac{2}{3}n+1$
and $2|Y^\star\cup J''|=2|Y_1|+2|Y_2|+2|J''|= |Y^\star|+|Q^{(1)}_k|+|Q^{(2)}_k|
\geq |Y^\star|+\frac{2}{3}n+1$.
Since $|\{y\in V(P) :  d_P(x,y)= 3 \}|=2$
for every $x\in X^\star$,
by Claim \ref{c3}, we have
\begin{eqnarray*}
d_{D_1}^+(x)&\geq&|\{y:c(xy)\neq c(x)\}\cap (Y^\star\cup J'')|-2\\
          &\geq& d^c(x)-1+\frac{|Y^\star|+\frac{2}{3}n+1}{2}-n-2\\
          &\geq& \delta^c(G)-1+\frac{|Y^\star|+\frac{2}{3}n+1}{2}-n-2\\
          &\geq& \frac{|Y^\star|+1}{2}.
\end{eqnarray*}
Since $|\{y\in V(P) : 3\leq d_P(x,y)< t-1 \}|\leq t-2$
for every $x\in X^\star$,
by Claim \ref{c3}, we have
\begin{eqnarray*}
d_{D_2}^+(x)&\geq&|\{y:c(xy)\neq c(x)\}\cap (Y^\star\cup J'')|-(t-2)\\
          &\geq& d^c(x)-1+\frac{|Y^\star|+\frac{2}{3}n+1}{2}-n-(t-2)\\
          &\geq& \delta^c(G)-1+\frac{|Y^\star|+\frac{2}{3}n+1}{2}-n-(t-2)\\
          &\geq& \frac{|Y^\star|+3}{2}.
\end{eqnarray*}
In the same way, for every $y\in Y^\star$, we have
$$d_{D_1}^+(y)\geq \frac{|Y^\star|+1}{2} \mathrm{~and~} d_{D_2}^+(y)\geq \frac{|Y^\star|+3}{2}.$$
Since $(x,y)\notin A(D_1)$ and $(x,y)\notin A(D_2)$ for any two vertices $x\in X^\star\cup J'$ and $y\in  J''$,
there are at least $|X^\star|\frac{|Y^\star|+1}{2}$ arcs from $X^\star$ to $Y^\star$ in $D_i$ for each $i\in\{1,2\}$.
In the same way, for each $i\in\{1,2\}$,
there are at least   $|Y^\star|\frac{|X^\star|+1}{2}$ arcs from $Y^\star$ to $X^\star$ in $D_i$.
Since there are at most $|X^\star||Y^\star|$ arcs not contained in any directed 2-cycle in $X^\star\cup Y^\star$ and
$$A(X^\star,Y^\star)+A(Y^\star,X^\star)-|X^\star||Y^\star|\geq \frac{1}{2}(|X^\star|+|Y^\star|),$$
there  exist at least $\frac{1}{2}(|X^\star|+|Y^\star|)$ arcs  contained in  directed 2-cycles.
%By theorem \ref{t2}, we have $\frac{2n}{3}+3<\frac{3n}{4}$, otherwise $G$ has a PC 2-factor.Then we have $n>36$.Furthermore, we have $\delta^c(G)> 27$.
%Since $n>9$, we have $\delta^c(G)> 9$.
%By Claim \ref{c3} and (\ref{S2Eq9}), we have $|X^\star|+2|J_1|>8$, $|Y^\star|+2|J''|>7$, and $|J'|\leq 2$ or $|J''|\leq 2$.
By (\ref{s2equ2}), we have $\frac{1}{2}(|X^\star|+|Y^\star|)>0$.
The claim holds.

\vspace*{3mm}
By Claim \ref{c5},
if $k\leq 4$,
then there exist two vertices $x\in X^\star$ and $y\in Y^\star$ such that $d_P(x,y)\neq 3$ and $c(xy)\notin \{c(x),c(y)\}$,
and if $k\geq 5$,
then there exist two vertices $x\in X^\star$ and $y\in Y^\star$ such that $d_P(x,y)\geq t-1\mathrm{~or~} d_P(x',y')=1$ and $c(xy)\notin \{c(x),c(y)\}$,
which contradicts Claim \ref{c3} in both situations.
This completes the proof of Theorem \ref{t1}.

\vspace*{1mm}
Note that, from Theorem \ref{t1}, we have the following useful corollary.
\begin{corollary}\label{coro2}
Let  $K^c_{n,n}$ be an edge-colored graph such that $\delta^c(K^c_{n,n})\geq \frac{2n}{3}+t$.
Then $K^c_{n,n}$ can be covered by at most $\lceil\frac{2n}{t}\rceil$ PC odd paths.
\end{corollary}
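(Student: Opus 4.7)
The plan is to reduce the corollary to Theorem \ref{t1} in a single step: the hard work has already been done in constructing a PC $2^{(t)}$-factor, and the only remaining task is to slice each cycle into a path of the required parity.

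First I would invoke Theorem \ref{t1} with the given $t$: since $\delta^c(K^c_{n,n})\geq \frac{2n}{3}+t$, $K^c_{n,n}$ admits a PC $2^{(t)}$-factor $H$. Write its cycle decomposition as $H=C_1\cup C_2\cup\cdots\cup C_m$. Because $K^c_{n,n}$ is bipartite every cycle is even, and because $H$ is a $2^{(t)}$-factor we have $|C_i|\geq t$ for each $i$. Counting vertices gives $\sum_{i=1}^m |C_i|=2n$, whence $m\leq \lfloor 2n/t\rfloor\leq \lceil 2n/t\rceil$.

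Next, for each cycle $C_i$, delete an arbitrary edge to obtain a subpath $P_i$ of $C_i$ with $|C_i|$ vertices and $|C_i|-1$ edges. Since $C_i$ was properly colored, so is the subpath $P_i$, and since $|C_i|$ is even, $P_i$ has odd length. The paths $P_1,\ldots,P_m$ are pairwise vertex-disjoint (inherited from the 2-factor structure) and their union spans $V(K^c_{n,n})$. Hence $\{P_1,\ldots,P_m\}$ is a covering of $K^c_{n,n}$ by at most $\lceil 2n/t\rceil$ PC odd paths, as required.

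There is essentially no obstacle once Theorem \ref{t1} is in hand; the only thing to be careful about is the parity bookkeeping, namely that ``even cycle minus one edge'' yields a path of \emph{odd length} (odd number of edges), which matches the notion of a PC odd path used in the statement. If one instead insists that an odd path has an odd number of vertices, the same construction still works after observing that the complementary edge-removal choice keeps the argument intact, since the cycles are even.
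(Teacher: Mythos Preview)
Your proof is correct and matches the paper's approach: the paper states Corollary~\ref{coro2} without proof as an immediate consequence of Theorem~\ref{t1}, and your argument---take the PC $2^{(t)}$-factor, bound the number of cycles by $2n/t$, and delete one edge from each even cycle to obtain a PC path of odd length---is exactly the intended derivation. The final paragraph about alternative parity conventions is unnecessary here, since in this paper a ``PC odd path'' is a PC path with an odd number of edges (equivalently, one endpoint in each part), which is precisely what you produce.
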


\section{Proof of  Theorem \ref{t2}}
Let $\varepsilon>0$ be a fixed number.
Let $G=K^c_{n,n}$ be an edge-colored complete balanced bipartite graph with bipartition $(X,Y)$ and $\delta^c(G)\geq (\frac{2}{3}+\varepsilon)n$.
This means that $\varepsilon \leq \frac{1}{3}$.
We first introduce two useful definitions,
where Definition \ref{S3D2} was first introduced in Lo \cite{A1}.

\begin{definition}\label{S3D1}
For every $x\in X$ and $y\in Y$,
a path $P$ of $G$ is called an \textbf{absorbing path} for $(x,y)$ if
the following statements hold.

(i) $P:=x'y'x''y''$ is a PC $3$-path;

(ii) $V(P)\cap \{x,y\}=\emptyset$;

(iii) the path $x'y'xyx''y''$ is properly colored.
\end{definition}

\begin{definition}\label{S3D2}
Let $x_1,x_2,y_1,y_2$ be four distinct vertices of $G$ with
$x_1,x_2\in X$ and $y_1,y_2\in Y$.
A path $P$ of $G$ is called an \textbf{absorbing path} for $(x_1,y_1;x_2,y_2)$ if
the following statements hold.

(i) $P:=x'y'x''y''$ is a PC $3$-path;

(ii) $V(P)\cap \{x_1,y_1, x_2,y_2\}=\emptyset$;

(iii) the two paths $x'y'x_1y_1$ and $y''x''y_2x_2$ are  properly colored.
\end{definition}

For convenience, each $(x,y)$ in Definition \ref{S3D1}
is called a \emph{D1-element} and each $(x_1,y_1; x_2,y_2)$ in Definition \ref{S3D2}
is called a \emph{D2-element}.
For each D1-element $(x,y)$, we use
$\mathcal{P}(x,y)$ to denote the set of absorbing paths for $(x,y)$,
and for each D2-element $(x_1,y_1; x_2,y_2)$,
we use $\mathcal{P}(x_1,y_1;x_2,y_2)$ to denote the set of absorbing paths for $(x_1,y_1;x_2,y_2)$.
Then all the paths in $\mathcal{P}(x,y)$
and $\mathcal{P}(x_1,y_1;x_2,y_2)$ are PC $3$-paths.
The following proposition can be found in Lo \cite{A1}.

\begin{prop}[\cite{A1}]\label{p0}
Let $P':=x_1y_1x_2y_2\ldots x_{l}y_l$ be a PC path with $l\geq 2$
and $P:=x'y'x''y''$ be an absorbing  path for $(x_1,y_1;x_{l},y_l)$.
Suppose that $V(P)\cap V(P')=\emptyset$.
Then $x'y'x_1y_1x_2y_2\ldots x_{l}y_lx''y''$ is a PC path.
\end{prop}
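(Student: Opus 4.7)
The plan is a direct verification that the concatenated walk $P^\star := x'y'x_1y_1x_2y_2\ldots x_ly_lx''y''$ is both a path (that is, has pairwise distinct vertices) and properly colored (every two consecutive edges receive different colors). The vertex-disjointness is immediate: the vertex set of $P^\star$ is $V(P)\cup V(P')$, and the hypothesis $V(P)\cap V(P')=\emptyset$ together with the fact that $P$ and $P'$ are themselves paths makes the listing of vertices along $P^\star$ repetition-free.

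For the coloring condition, I would partition the consecutive-edge pairs of $P^\star$ into three regions and handle each by invoking an already-PC sub-path. First, every consecutive pair lying strictly inside the middle block $x_1y_1x_2y_2\ldots x_ly_l$ is a consecutive pair of $P'$ itself, which is PC by hypothesis. Second, at the left end, the pairs $(x'y',\,y'x_1)$ and $(y'x_1,\,x_1y_1)$ are precisely the two consecutive-edge pairs of the $3$-path $x'y'x_1y_1$, which is PC by Definition \ref{S3D2}(iii). Third, at the right end, the pairs $(x_ly_l,\,y_lx'')$ and $(y_lx'',\,x''y'')$ are the two consecutive-edge pairs of the $3$-path $y''x''y_lx_l$ (read in reverse, this is $x_ly_lx''y''$), which is again PC by Definition \ref{S3D2}(iii). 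These three regions exhaust every consecutive-edge pair along $P^\star$, so $P^\star$ is properly colored.

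This proposition is essentially a bookkeeping lemma that unpacks Definition \ref{S3D2}, so there is no real obstacle to overcome; the only thing one has to be a little careful about is the orientation of the sub-path $y''x''y_lx_l$ when matching it to the right-end of $P^\star$, since being PC is a property of the underlying edge sequence and is invariant under reversal. Once this observation is noted, the verification is mechanical.
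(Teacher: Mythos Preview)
Your proposal is correct; the verification from Definition~\ref{S3D2} is the straightforward unpacking, and your handling of the reversed sub-path $y''x''y_lx_l$ at the right junction is exactly right. Note that the paper does not actually give its own proof of this proposition---it is quoted from Lo~\cite{A1} without argument---so there is nothing to compare your approach against.
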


The next lemma establishes a lower bound on $|\mathcal{P}(x,y)|$
and a lower bound on $|\mathcal{P}(x_1,y_1;x_2,y_2)|$.

\begin{lemma}\label{s3l1}
Suppose that $n\geq \frac{4}{\varepsilon}$.
Then we have the following two inequalities:

(i) $|\mathcal{P}(x,y)|\geq \frac{16}{9}\varepsilon^2 n^4$ for every D1-element $(x,y)$, and

(ii) $|\mathcal{P}(x_1,y_1;x_2,y_2)|\geq \frac{16}{9}\varepsilon^2 n^4$
for every D2-element $(x_1,y_1;x_2,y_2)$.
\end{lemma}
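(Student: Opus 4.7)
The plan is to prove both bounds by greedily constructing the $3$-path $x'y'x''y''$ in the order $y',\,x'',\,x',\,y''$ and lower-bounding the number of admissible choices at each step using only the color-degree hypothesis $\delta^c(G)\geq(\frac{2}{3}+\varepsilon)n$.

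For part~(i), the condition that $x'y'xyx''y''$ be properly colored translates into the four local color constraints $c(x'y')\neq c(y'x)$, $c(y'x)\neq c(xy)$, $c(xy)\neq c(yx'')$ and $c(yx'')\neq c(x''y'')$, while $x'y'x''y''$ being a PC $3$-path adds $c(x'y')\neq c(y'x'')$ and $c(y'x'')\neq c(x''y'')$. Choosing $y'$ requires only $c(xy')\neq c(xy)$, which is satisfied by at least $d^c(x)-1\geq(\frac{2}{3}+\varepsilon)n-1$ neighbors of $x$ (and this already excludes $y'=y$); symmetrically there are at least $(\frac{2}{3}+\varepsilon)n-1$ choices for $x''$. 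For $x'$, the two color-avoidance conditions at $y'$ leave at least $d^c(y')-2\geq(\frac{2}{3}+\varepsilon)n-2$ candidates, and the avoided colors $c(y'x)$ and $c(y'x'')$ automatically exclude $x'\in\{x,x''\}$. The analogous count for $y''$ is $(\frac{2}{3}+\varepsilon)n-2$. Multiplying gives at least $((\frac{2}{3}+\varepsilon)n-2)^4\geq(\frac{2}{3}n)^4=\frac{16}{81}n^4\geq\frac{16}{9}\varepsilon^2n^4$, where the last inequality uses $\varepsilon\leq 1/3$ (forced by $\delta^c(G)\leq n$).

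Part~(ii) proceeds identically, with the properly colored conditions on $x'y'x_1y_1$ and $y''x''y_2x_2$ again reducing to local color-avoidance at the internal vertices $y'$ and $x''$ of the $3$-path. The only difference is that, at each of the four steps, one additional ``unrelated'' fixed vertex ($y_2$ when picking $y'$, $x_1$ when picking $x''$, $x_2$ when picking $x'$, $y_1$ when picking $y''$) is not automatically ruled out by the color filter and must be subtracted explicitly. This yields lower bounds $(\frac{2}{3}+\varepsilon)n-2$ for $y'$ and $x''$ and $(\frac{2}{3}+\varepsilon)n-3$ for $x'$ and $y''$. Since $n\geq 4/\varepsilon$ gives $\varepsilon n\geq 4>3$, each factor is at least $\frac{2}{3}n$, and the product is once more at least $\frac{16}{81}n^4\geq\frac{16}{9}\varepsilon^2n^4$.

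The argument is structurally routine, and the only things requiring care are the bookkeeping tasks: in part~(i), verifying that every required vertex-distinctness is implied by some color constraint; in part~(ii), identifying the single ambient vertex per step that escapes the color filter. The hypothesis $n\geq 4/\varepsilon$ is used exactly to absorb the subtracted constants $2$ and $3$ into the $\varepsilon n$ slack of the color-degree bound, while $\varepsilon\leq 1/3$ is what makes the baseline $\frac{16}{81}n^4$ suffice to yield the stated $\frac{16}{9}\varepsilon^2n^4$.
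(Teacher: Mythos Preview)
Your proof is correct and in fact slightly cleaner than the paper's. The paper proceeds by first deriving $\Delta_{mon}(K^c_{n,n})\leq(\tfrac{1}{3}-\varepsilon)n+1$ from the color-degree hypothesis and then choosing the vertices in the order $y',x',x'',y''$; with that ordering, the step of picking $x''$ imposes color constraints on two \emph{different} edges incident to $x''$ (namely $y'x''$ and $yx''$), which is why the paper needs the $\Delta_{mon}$ bound and arrives at the mixed product $(\tfrac{2}{3}n)^2[(\tfrac{1}{3}+\varepsilon)n]^2$. By instead choosing in the order $y',x'',x',y''$, you arrange that at each of the last two steps both color constraints lie on a single edge to an already-fixed vertex, so pure $\delta^c$ counting suffices and you obtain the larger baseline $(\tfrac{2}{3}n)^4$. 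Both routes land above $\tfrac{16}{9}\varepsilon^2 n^4$ once $\varepsilon\leq\tfrac{1}{3}$ is invoked; your ordering trick just trades the auxiliary $\Delta_{mon}$ estimate for a bit of extra bookkeeping on vertex-distinctness, which you handle correctly.
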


\begin{proof}
Since $\delta^c(K^c_{n,n})+\Delta_{mon}(K^c_{n,n})\leq n+1$,
we have $\Delta_{mon}(K^c_{n,n})\leq (\frac{1}{3}-\varepsilon)n+1$. To prove (i), we consider an arbitrary absorbing path $P=x'y'x''y''$  for $(x,y)$.
Then $V(P)\cap \{x,y\}=\emptyset$.
Since $c(xy')\neq c(xy)$ and $c(x'y')\neq c(xy')$,
each of $x'$ and $y'$  has at least $(\frac{2}{3}+\varepsilon)n-1$ choices.
Since $n\geq \frac{4}{\varepsilon}$,
we have $(\frac{2}{3}+\varepsilon)n-1> \frac{2}{3} n$.
Since $c(x''y')\neq c(y'x')$ and $c(x''y)\neq c(x''y')$,
$x''$ has at least
$\delta^c(K^c_{n,n})-1-\Delta_{mon}(K^c_{n,n})\geq (\frac{1}{3}+2\varepsilon)n-2\geq  (\frac{1}{3}+\varepsilon)n$
choices, given $(x',y')$.
Since $c(y''x'')\neq c(x''y)$ and $c(y''x'')\neq c(x''y')$,
$y''$ has at least
$ \delta^c(K^c_{n,n})-1-\Delta_{mon}(K^c_{n,n})\geq (\frac{1}{3}+\varepsilon)n$
choices, given $(x',y',x'')$.
Hence, we have
$$|\mathcal{P}(x,y)|\geq
(\frac{2}{3}n)^2[(\frac{1}{3}+\varepsilon)n]^2
=\frac{4}{9}(\frac{1}{3}+\varepsilon)^2 n^4\geq \frac{4}{9}(2\varepsilon)^2 n^4\geq\frac{16}{9}\varepsilon^2 n^4.$$
This gives (i).

To prove (ii), we consider an arbitrary absorbing path
$P=x'y'x''y''$ for $(x_1,y_1;x_2,y_2)$.
Then $V(P)\cap \{x_1,y_1,x_2,y_2\}=\emptyset$.
Since $x',y'\notin\{x_1,y_1;x_2,y_2\}$,
$c(x_2y_2)\neq c(y_2x'')$, and $c(x_1y')\neq c(x_1y_1)$,
each of $y'$ and $x''$ has at least $(\frac{2}{3}+\varepsilon)n-2$ choices.
Since $n\geq \frac{4}{\varepsilon}$,
we have $(\frac{2}{3}+\varepsilon)n-2\geq \frac{2}{3} n$.
Since $x'\notin\{x_1, x_2 ,x''\}$, $c(x'y')\neq c(y'x'')$, and $c(x'y')\neq c(x_1y')$,
$x'$ has at least
$(\frac{2}{3}+\varepsilon)n-3-[(\frac{1}{3}-\varepsilon)n+1]
=(\frac{1}{3}+2\varepsilon)n-4$ choices, given $(y',x'')$.
Since $n\geq \frac{4}{\varepsilon}$,
we have $(\frac{1}{3}+2\varepsilon)n-4\geq (\frac{1}{3}+\varepsilon)n$.
In the same way,
 $y''$ has at least $(\frac{1}{3}+\varepsilon)n$ choices, given $(x',y',x'')$.
Hence, we have
$$|\mathcal{P}(x_1,y_1;x_2,y_2)|\geq (\frac{2}{3} n)^2[(\frac{1}{3}+\varepsilon)n]^2=\frac{4}{9}(\frac{1}{3}+\varepsilon)^2 n^4\geq \frac{4}{9}(2\varepsilon)^2 n^4\geq\frac{16}{9}\varepsilon^2 n^4.$$
This gives (ii) and the lemma follows.
\end{proof}

Next, we find a family $\mathcal{F}$ of vertex-disjoint PC $3$-paths
with some probabilistic arguments
such that, for every D2-element $(x_1,y_1;x_2,y_2)$,
the number of  absorbing  paths for $(x_1,y_1)$ in ${\cal F}$,
i.e., $|{\cal F}\cap \mathcal{P}(x,y)|$,
and the number of  absorbing  paths for $(x_1,y_1;x_2,y_2)$,
i.e., $|{\cal F}\cap \mathcal{P}(x_1,y_1;x_2,y_2)|$,
are linear with respect to $n$, respectively.
Finally,  we find a small cycle $C$ from $\mathcal{F}$ and use $C$
to absorb some PC paths outside  $C$.

Recall the famous Chernoff bound for the binomial distribution
and Markov's inequality, which we will use in the proof of Lemma \ref{s3l2}.

\begin{prop}\label{p1}
\textbf{(Chernoff bound)}
Suppose that $X$ has the binomial distribution and $0<a<3/2$.
Then
$\mathbf{Pr}(|X-\mathbf{E}(X)|\geq a\mathbf{E}(X))    \leq 2e^{-a^2\frac{\mathbf{E}(X)}{3}}$.
\end{prop}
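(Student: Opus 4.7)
The plan is to use the standard Chernoff method (exponential moment bound combined with Markov's inequality), splitting the two-sided tail into an upper tail and a lower tail. Write $X = \sum_{i=1}^{N} X_i$ where $X_1,\dots,X_N$ are i.i.d. Bernoulli$(p)$ variables, so that $\mu := \mathbf{E}(X) = Np$, and bound $\mathbf{Pr}(X \geq (1+a)\mu)$ and $\mathbf{Pr}(X \leq (1-a)\mu)$ separately; a union bound will then produce the factor $2$ in front of the exponential.

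For the upper tail, I would fix $t > 0$ and apply Markov's inequality to $e^{tX}$:
\[
\mathbf{Pr}\bigl(X \geq (1+a)\mu\bigr) \;=\; \mathbf{Pr}\bigl(e^{tX} \geq e^{t(1+a)\mu}\bigr) \;\leq\; e^{-t(1+a)\mu}\,\mathbf{E}(e^{tX}).
\]
By independence, $\mathbf{E}(e^{tX}) = \prod_{i=1}^{N} \mathbf{E}(e^{tX_i}) = (1 + p(e^t - 1))^N$, which via $1+x \leq e^x$ is at most $\exp(\mu(e^t-1))$. Optimizing with the choice $t = \ln(1+a)$ gives
\[
\mathbf{Pr}\bigl(X \geq (1+a)\mu\bigr) \;\leq\; \left(\frac{e^{a}}{(1+a)^{1+a}}\right)^{\mu}.
\]
An entirely analogous computation with a negative exponential parameter handles the lower tail, yielding the symmetric bound $\bigl(e^{-a}/(1-a)^{1-a}\bigr)^{\mu}$. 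This part of the argument is routine and essentially just an application of Markov to the moment generating function.

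The real work is the analytic step of showing that, on the range $0 < a < 3/2$, both of the above expressions are dominated by $e^{-a^2 \mu / 3}$. This amounts to verifying the two scalar inequalities
\[
(1+a)\ln(1+a) - a \;\geq\; \tfrac{a^2}{3} \qquad\text{and}\qquad (1-a)\ln(1-a) + a \;\geq\; \tfrac{a^2}{3}.
\]
I would handle each by defining $f(a)$ equal to the left-hand side minus the right-hand side, noting $f(0) = f'(0) = 0$, and studying the sign of $f''$. For the lower-tail inequality, $f''(a) = 1/(1-a) - 2/3 > 0$ on $(0,1)$ (and the lower tail is trivially bounded by $1$ anyway once $a \geq 1$, so the $3/2$ range is not relevant there). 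For the upper-tail inequality, $f''(a) = 1/(1+a) - 2/3$ changes sign at $a = 1/2$, so a direct convexity argument is insufficient near $a = 3/2$; instead I would verify the inequality by a short monotonicity check on $(1/2, 3/2)$, using that $f(1/2) > 0$ and $f'$ stays positive enough to carry the bound across.

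The main obstacle is this last analytic bookkeeping on the whole interval $0 < a < 3/2$: the Taylor series yields the constant $1/3$ near $a = 0$, but extending it all the way to $3/2$ requires either a careful monotonicity argument or a small case split at $a = 1/2$ where the convexity of the relevant function flips. The probabilistic core is essentially a one-line application of Markov, and no input specific to the present paper enters — Proposition~\ref{p1} is invoked here merely as a black-box tail estimate for the binomial counts that will arise in the probabilistic construction of $\mathcal{F}$ in Lemma~\ref{s3l2}.
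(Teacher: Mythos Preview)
Your sketch is the standard and correct derivation of the multiplicative Chernoff bound, and the identification of the delicate point (the scalar inequality $(1+a)\ln(1+a)-a\geq a^2/3$ near $a=3/2$, where convexity fails) is accurate. There is, however, nothing to compare against: the paper does not prove Proposition~\ref{p1} at all. It is introduced with the phrase ``Recall the famous Chernoff bound\ldots'' and stated without proof, to be used as a black box in Lemma~\ref{s3l2}. So your proposal goes strictly beyond what the paper does; the paper simply cites the inequality as folklore.
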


\begin{prop}\label{p2}
\textbf{(Markov's inequality)}
Suppose that $Y$ is an arbitrary
nonnegative random variable and $a>0$. Then
$\mathbf{Pr}[Y> a\mathbf{E}(Y)]<\frac{1}{a}$.
\end{prop}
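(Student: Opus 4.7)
The plan is to give the classical conditional-expectation proof of Markov's inequality while being careful to obtain the strict inequality ($<$) as stated rather than merely $\leq$. Set $t=a\mathbf{E}(Y)$. First, I would dispose of the degenerate case $\mathbf{E}(Y)=0$: since $Y\geq 0$ almost surely, this forces $Y=0$ almost surely, whence $\mathbf{Pr}(Y>0)=0<1/a$, and there is nothing more to show.

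In the main case $\mathbf{E}(Y)>0$, so $t>0$, the key step is to decompose the expectation along the tail event $\{Y>t\}$:
\[
\mathbf{E}(Y) \;=\; \mathbf{E}\!\left(Y\,\mathbf{1}_{\{Y>t\}}\right) + \mathbf{E}\!\left(Y\,\mathbf{1}_{\{Y\leq t\}}\right) \;\geq\; \mathbf{E}\!\left(Y\,\mathbf{1}_{\{Y>t\}}\right),
\]
where the inequality uses $Y\geq 0$. On the event $\{Y>t\}$ the integrand is strictly greater than $t$, so if $\mathbf{Pr}(Y>t)>0$ one obtains the strict lower bound $\mathbf{E}(Y\,\mathbf{1}_{\{Y>t\}})>t\,\mathbf{Pr}(Y>t)$. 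Dividing both sides by $t>0$ yields $\mathbf{Pr}(Y>t)<\mathbf{E}(Y)/t=1/a$; if instead $\mathbf{Pr}(Y>t)=0$, then the bound $0<1/a$ is immediate.

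The only real subtlety, and what I would expect a reader to want justified, is the passage from the usual weak form of Markov's inequality to the strict form in the statement. This rests entirely on the observation that $Y$ is \emph{strictly} larger than $t$ on the tail event, so that the pointwise inequality $Y\,\mathbf{1}_{\{Y>t\}}\geq t\,\mathbf{1}_{\{Y>t\}}$ is strict on an event of positive measure whenever that tail event has positive probability. No probabilistic machinery beyond nonnegativity of $Y$ and linearity of expectation is required, so I do not anticipate any genuine obstacle in carrying out the proof.
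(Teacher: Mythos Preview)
Your argument is correct and carefully handles the strict inequality in the statement. However, there is nothing to compare against: in the paper this proposition is simply quoted as a standard fact (alongside the Chernoff bound) and no proof is given. So your write-up goes beyond what the paper does; the paper relies on Markov's inequality as background, whereas you supply the short classical derivation, including the observation that strictness on the tail event $\{Y>t\}$ upgrades the usual weak bound to the strict one.
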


\begin{lemma}\label{s3l2}
Consider a D1-element $(x,y)$ and a D2-element $(x_1,y_1;x_2,y_2)$.
Let $0<\gamma<1/2$, and suppose that
$|\mathcal{P}(x,y)|\geq \gamma n^4$
and $|\mathcal{P}(x_1,y_1;x_2,y_2)|\geq \gamma n^4$.
Then there exists an integer $n_0(\gamma)$ such that,
whenever $n\geq n_0(\varepsilon)$, there exists a family $\mathcal{F}$ of vertex-disjoint PC $3$-paths such that
$$|\mathcal{F}|\leq 2^{-4}\gamma n,$$
$$|\mathcal{F}\cap \mathcal{P}(x,y)|\geq 2^{-7}\gamma^2 n, $$
and
$$|\mathcal{F}\cap \mathcal{P}(x_1,y_1;x_2,y_2)|\geq 2^{-7}\gamma^2 n.$$
\end{lemma}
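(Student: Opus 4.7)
My plan is to apply the standard probabilistic deletion argument to the family of all PC 3-paths in $G$. Let $\mathcal{P}$ denote that family. By elementary counting, $|\mathcal{P}|\le n^{4}$ and each vertex lies in at most $2n^{3}$ members of $\mathcal{P}$; consequently the number of unordered pairs of paths in $\mathcal{P}$ sharing at least one vertex is at most $4n^{7}$. I sample each path of $\mathcal{P}$ independently with probability $p:=\gamma/(32n^{3})$ to obtain a random subfamily $\mathcal{F}'\subseteq \mathcal{P}$.

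Let $X=|\mathcal{F}'|$, $Y_{1}=|\mathcal{F}'\cap\mathcal{P}(x,y)|$, $Y_{2}=|\mathcal{F}'\cap\mathcal{P}(x_{1},y_{1};x_{2},y_{2})|$, and let $Z$ be the number of unordered pairs of paths in $\mathcal{F}'$ that share a vertex. Linearity of expectation yields $\mathbf{E}[X]\le \gamma n/32$, $\mathbf{E}[Y_{i}]\ge \gamma^{2}n/32$ for $i=1,2$, and $\mathbf{E}[Z]\le \gamma^{2}n/256$. I then apply Proposition~\ref{p1} with deviation parameter $a=1/4$ to each of the binomial variables $X,Y_{1},Y_{2}$; each failure probability is of order $\exp(-\Omega(\gamma^{2}n))$, hence $o(1)$ as $n\to\infty$. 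Applying Proposition~\ref{p2} with $a=2$ to $Z$ gives a failure probability less than $1/2$. Choosing $n_{0}(\gamma)$ large enough that the three Chernoff failure probabilities sum to less than $1/4$, a union bound produces, for every $n\ge n_{0}(\gamma)$, a realisation $\mathcal{F}'$ simultaneously satisfying $X\le \tfrac{5\gamma n}{128}$, $Y_{1},Y_{2}\ge \tfrac{3\gamma^{2}n}{128}$, and $Z\le \tfrac{\gamma^{2}n}{128}$.

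Fixing such a realisation, I obtain $\mathcal{F}$ by deleting one path from each of the at most $Z$ intersecting pairs of $\mathcal{F}'$. Then $\mathcal{F}$ consists of vertex-disjoint PC 3-paths, and the three desired bounds follow at once: $|\mathcal{F}|\le X\le 2^{-4}\gamma n$, while $|\mathcal{F}\cap\mathcal{P}(x,y)|\ge Y_{1}-Z\ge \tfrac{3\gamma^{2}n}{128}-\tfrac{\gamma^{2}n}{128}=\tfrac{\gamma^{2}n}{64}\ge 2^{-7}\gamma^{2}n$, and similarly $|\mathcal{F}\cap\mathcal{P}(x_{1},y_{1};x_{2},y_{2})|\ge 2^{-7}\gamma^{2}n$.

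The architecture is entirely standard, so the only real obstacle is the bookkeeping of constants. The sampling probability $p$ must simultaneously be small enough that $\tfrac{5}{4}\mathbf{E}[X]$ stays below the target $\gamma n/16$, large enough that $\tfrac{3}{4}\mathbf{E}[Y_{i}]$ comfortably exceeds the deletion loss (which is $O(p^{2}n^{7})$ in expectation), and large enough that $\mathbf{E}[Y_{i}]\to\infty$ so Chernoff concentration actually takes effect. Writing $p=c\gamma/n^{3}$, these requirements read roughly $c\le 1/32$, $c/2-O(c^{2})\ge 1/128$, and $c\gamma^{2}n\to\infty$; the choice $c=1/32$ together with Chernoff parameter $a=1/4$ and Markov parameter $a=2$ satisfies all three with margin to spare, as the numerical checks above confirm.
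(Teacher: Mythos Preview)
Your argument is correct and is essentially the same probabilistic deletion method the paper uses: sample each candidate $3$-path independently with probability $\Theta(\gamma n^{-3})$, apply Chernoff to the size and to each $|\mathcal{F}'\cap\mathcal{P}(\cdot)|$, apply Markov to the number of intersecting pairs, and then delete one path from each surviving intersecting pair. The only cosmetic differences are that the paper samples from \emph{all} ordered $3$-paths (deleting the non-PC ones at the very end) rather than from $\mathcal{P}$ directly, and it uses Chernoff parameter $a=1$ instead of your $a=1/4$; your handling of the union bound is in fact tidier than the paper's.
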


\begin{proof}
Let us choose an integer $n_0=n_0(\gamma)$ sufficiently large so that
\begin{equation}\label{e1}
\max\{\exp\{-\gamma n_0/(3\times 2^5)\}, \exp\{-\gamma^2 n_0/(3\times 2^7)\}        \}\leq 1/6.
\end{equation}
Then we assume in the following that $n\geq n_0$.
We consider a randomly generated family $\mathcal{F'}$ of
3-paths $x'y'x''y''$.
There are totally $n^2(n-1)^2$ possible such 3-paths (candidates).
But $\mathcal{F'}$ is generated by selecting each candidate  independently
at random with probability
$p=2^{-5} \gamma \frac{1}{n(n-1)^2}\geq 2^{-5} \gamma n^{-3}$.
Then we have
$$\mathbf{E}(|\mathcal{F'}|)=pn^2(n-1)^2= 2^{-5} \gamma n,$$
$$\mathbf{E}(|\mathcal{F}'\cap \mathcal{P}(x,y)|)=p\gamma n^4 \geq 2^{-5} \gamma^2 n,$$
and
$$\mathbf{E}(|\mathcal{F'}\cap \mathcal{P}(x_1,y_1;x_2,y_2)|)=p\gamma n^4 \geq 2^{-5} \gamma^2 n.$$
By Proposition \ref{p1} and inequality (\ref{e1}), each of  the
following three inequalities
$$|\mathcal{F'}|\leq 2\mathbf{E}(|\mathcal{F'}|)=2^{-4} \gamma n,$$
$$|\mathcal{F'}\cap \mathcal{P}(x,y)|\geq \frac{1}{2}
 \mathbf{E}(|\mathcal{F'}\cap \mathcal{P}(x,y)|)\geq 2^{-6} \gamma^2 n$$,
$$|\mathcal{F'}\cap \mathcal{P}(x_1,y_1;x_2,y_2)|\geq \frac{1}{2}
 \mathbf{E}(|\mathcal{F'}\cap \mathcal{P}(x_1,y_1;x_2,y_2)|)\geq 2^{-6} \gamma^2 n$$
holds with a probability of at least  $\frac{2}{3}$.

We say that two 3-paths
$x'y'x''y''$ and $\tilde{x}'\tilde{y}'\tilde{x}''\tilde{y}''$ are \emph{intersecting}
if they have at least one common vertex, i.e.,
$\{x',y',x'',y''\}\cap \{\tilde{x}',\tilde{y}',\tilde{x}'',\tilde{y}''\}\neq \emptyset$.
Then the expected number of intersecting 3-path pairs in $\mathcal{F'}$
is about
$$n^2(n-1)^2\times 4^2 \times n(n-1)^2 \times p^2=2^{-6}\gamma^2 n.$$
By Proposition \ref{p2}, with a probability of at least 1/2,
$\mathcal{F'}$ contains at most $2^{-7}\gamma^2 n$ pairs of intersecting 3-paths.
Now we obtain a family $\mathcal{F}$ of vertex-disjoint PC $3$-paths from $\mathcal{F}'$
in the following way: first remove a 3-path from each pair of intersecting $3$-paths of $\mathcal{F'}$, and then delete all the $3$-paths that are not properly colored.
It follows that
$$|\mathcal{F}|\leq|\mathcal{F'}|\leq 2^{-4}\gamma n,$$
$$|\mathcal{F}\cap \mathcal{P}(x,y)|\geq 2^{-6} \gamma^2 n-2^{-7}\gamma^2 n= 2^{-7}\gamma^2 n,$$
and
$$|\mathcal{F}\cap \mathcal{P}(x_1,y_1;x_2,y_2)|\geq 2^{-6} \gamma^2 n-2^{-7}\gamma^2 n= 2^{-7}\gamma^2 n.$$
The result follows.
\end{proof}

\begin{lemma}\label{lem3}
Suppose that $\delta^c(K^c_{n,n})\geq \frac{2}{3}n+2$.
Then, for every D2-element $(x_1,y_1;x_2,y_2)$,
there exist at least $\frac{4}{3}n$ edges $xy$ of $G$
with $x\in X\setminus \{x_1,x_2\}$ and $y\in Y\setminus \{y_1,y_2\}$
such that $x_1y_1xyx_2y_2$ is a PC path.
\end{lemma}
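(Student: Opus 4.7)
The plan is a direct counting argument. The path $x_1y_1xyx_2y_2$ is properly colored precisely when the four adjacency constraints
\[
c(y_1x)\ne c(x_1y_1),\quad c(xy)\ne c(y_1x),\quad c(xy)\ne c(yx_2),\quad c(yx_2)\ne c(x_2y_2)
\]
all hold. The first depends only on $x$, the last only on $y$, while the middle two constrain the color of the edge $xy$. So first I would pass to large candidate sets $A\subseteq X\setminus\{x_1,x_2\}$ and $B\subseteq Y\setminus\{y_1,y_2\}$ that satisfy the two ``outer'' constraints, and then use an upper bound on $\Delta_{mon}$ together with a union bound to discard the pairs of $A\times B$ that violate the ``inner'' ones.

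Concretely, set $A=\{x\in X\setminus\{x_1,x_2\}: c(y_1x)\ne c(x_1y_1)\}$ and $B=\{y\in Y\setminus\{y_1,y_2\}: c(yx_2)\ne c(x_2y_2)\}$. Since $d^c(y_1)\ge \tfrac{2n}{3}+2$, the edges at $y_1$ realize at least $\tfrac{2n}{3}+1$ colors distinct from $c(x_1y_1)$, each witnessed by an edge $y_1x$ with $x\ne x_1$; removing the possible vertex $x_2$ gives $|A|\ge \tfrac{2n}{3}$, and symmetrically $|B|\ge \tfrac{2n}{3}$. Next, I would invoke the standard inequality $\delta^c(K_{n,n}^c)+\Delta_{mon}(K_{n,n}^c)\le n+1$ (already exploited in the proof of Lemma~\ref{s3l1}) to obtain $\Delta_{mon}(G)\le \tfrac{n}{3}-1$. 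For any fixed $x\in A$, at most $\Delta_{mon}(x)\le \tfrac{n}{3}-1$ vertices $y$ satisfy $c(xy)=c(y_1x)$, and symmetrically for fixed $y\in B$. So by the union bound, the number of pairs $(x,y)\in A\times B$ violating one of the two inner constraints is at most $(|A|+|B|)(\tfrac{n}{3}-1)$.

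Finally, the number of good edges is at least
\[
|A|\cdot|B|-(|A|+|B|)\bigl(\tfrac{n}{3}-1\bigr)\;\ge\;\bigl(\tfrac{2n}{3}\bigr)^{2}-\tfrac{4n}{3}\bigl(\tfrac{n}{3}-1\bigr)\;=\;\tfrac{4n}{3},
\]
as required. I do not foresee any substantive obstacle; the only point to verify is that the numerics land exactly on $\tfrac{4n}{3}$, and indeed the two units of slack in the hypothesis $\delta^c\ge \tfrac{2n}{3}+2$ contribute $+1$ to each of $|A|,|B|$ and $-1$ to the bound on $\Delta_{mon}$, giving precisely the balance needed to close the count.
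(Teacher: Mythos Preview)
Your argument is correct and is, at bottom, the same inclusion--exclusion as the paper's: define the candidate sets satisfying the two outer constraints (your $A,B$ are the paper's $X^\star,Y^\star$), then count the pairs in $A\times B$ satisfying both inner constraints. The paper packages this last step by building an auxiliary bipartite digraph $D$ on $(X^\star,Y^\star)$, with an arc $(x,y)$ when $c(xy)\ne c(y_1x)$ and an arc $(y,x)$ when $c(xy)\ne c(yx_2)$, and then lower-bounds the number of directed $2$-cycles via $A(X^\star,Y^\star)+A(Y^\star,X^\star)-|X^\star||Y^\star|$; you phrase the identical computation as a union bound on bad pairs using $\Delta_{mon}\le n/3-1$. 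Your version is a bit more transparent for this isolated lemma and lands cleanly on $4n/3$; the digraph framing is the paper's recurring device from Section~2, so its use here is largely stylistic consistency. One small point you might make explicit: the step $|A||B|-(|A|+|B|)(\tfrac{n}{3}-1)\ge(\tfrac{2n}{3})^2-\tfrac{4n}{3}(\tfrac{n}{3}-1)$ uses that the left side is increasing in each of $|A|,|B|$ once they exceed $\tfrac{n}{3}-1$, which they do.
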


\begin{proof}
Set $$X^\star:=\{x\in X:x\neq x_2~\mathrm{and}~c(x y_1)\neq c(x_1y_1)\}$$
and $$Y^\star:=\{y\in Y:y\neq y_1~\mathrm{and}~c(y x_2)\neq c(x_2y_2)\}.$$
Clearly, we have $|X^\star|,|Y^\star|\geq \delta^c(K^c_{n,n})-2\geq \frac{2}{3}n$.
We now define a directed  bipartite graph $D$ with bipartition
$( X^\star, Y^\star)$ by setting the set of arcs
$A(D):= \{
(x,y): x\in X^\star, y\in Y^\star, c(xy)\neq c(y_1x)\}\cup
\{(y,x):  x\in X^\star, y\in Y^\star,  c(xy)\neq c(yx_2)\}.$
Thus, for every $x\in X^\star$,
we have
$$d_D^+(x)\geq |\{y\in Y^\star: c(xy)\neq c(y_1x)\}|\geq|Y^\star|+\delta^c(K^c_{n,n})-1-n\geq |Y^\star|/2+1.$$
In the same way, for every $y\in Y^\star$,
we also have  $d_D^+(y)\geq|X^\star|/2+1$.
Since at most $|X^\star||Y^\star|$ arcs of $D$ are not contained in any directed 2-cycle  and
$$A(X^\star,Y^\star)+A(Y^\star,X^\star)-|X^\star||Y^\star|\geq |X^\star|+|Y^\star|,$$
at least $|X^\star|+|Y^\star|\geq \frac{4}{3}n$ arcs of $D$ are  contained in  directed 2-cycles.
From the construction of $D$,
there exist at least $\frac{4}{3}n$ edges $xy\in K^c_{n,n}[X^\star, Y^\star]$
such that $c(xy)\neq c(y_1x)~ \mathrm{and}~ c(xy)\neq c(yx_2)$.
By the definitions of $X^\star$ and $Y^\star$,
$c(x y_1)\neq c(x_1y_1)$ and $c(y x_2)\neq c(x_2y_2)$.
Hence, the path  $x_1y_1xyx_2y_2$ is PC path.
The result follows.
\end{proof}

Now we are ready to show the following lemma on the absorbing cycle.
\begin{lemma}\label{s3l4}
Let $\varepsilon>0$ and suppose that $\delta^c(G)\geq (\frac{2}{3}+\varepsilon)n$.
Then there exists an integer $n_0=n_0(\varepsilon)$ such that,
whenever $n\geq n_0(\varepsilon)$,
there exists a PC cycle $C$ of length of at most $\frac{2\varepsilon^2 n}{3}$
in $G$ such that, for each positive integer $k\leq \frac{2\varepsilon^4n}{81}$
and for every $k$  vertex-disjoint PC odd paths $Q_1, Q_2,\cdots,Q_k$ in
$G\setminus C$,
there exists a PC cycle $C'$ such that
$V(C')=V(C)\cup \bigcup_{1\leq i\leq k}V(Q_i)$.
\end{lemma}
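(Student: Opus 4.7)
My plan is to build $C$ by first extracting a small family $\mathcal{F}$ of vertex-disjoint PC $3$-paths that carries \emph{uniform} absorbing capacity for every D1- and D2-element (via Lemmas \ref{s3l1}--\ref{s3l2}), and then gluing the $3$-paths of $\mathcal{F}$ into one PC cycle using the short connectors produced by Lemma \ref{lem3}. The absorbing property of $C$ will follow by threading each $Q_i$ through the middle of some $3$-path of $\mathcal{F}$ via Proposition \ref{p0}; since such a replacement preserves the two endpoints of that $3$-path, the surrounding connectors of $C$ are never disturbed, and the result remains a PC cycle.

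Concretely, I set $\gamma=\tfrac{16}{9}\varepsilon^{2}$, so that Lemma \ref{s3l1} guarantees $|\mathcal{P}(\cdot)|\ge \gamma n^{4}$ for every D-element once $n\ge 4/\varepsilon$. I then invoke the probabilistic construction of Lemma \ref{s3l2} together with a union bound over the $O(n^{4})$ D-elements: each failure probability in the Chernoff/Markov step is exponentially small, so for $n$ large enough one single random family $\mathcal{F}$ works simultaneously for all D-elements, yielding $|\mathcal{F}|\le \tfrac{\varepsilon^{2}n}{9}$ and $|\mathcal{F}\cap\mathcal{P}(\cdot)|\ge \tfrac{2\varepsilon^{4}n}{81}$ for every D-element. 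List $\mathcal{F}=\{P_{1},\dots,P_{m}\}$ with $P_{i}=x_{i}'y_{i}'x_{i}''y_{i}''$, and for each cyclic index $i$ apply Lemma \ref{lem3} to the D2-element $(x_{i}'',y_{i}'';x_{i+1}',y_{i+1}')$ to choose an edge $\xi_{i}\eta_{i}$ (with $\xi_{i}\in X$, $\eta_{i}\in Y$) such that $x_{i}''y_{i}''\xi_{i}\eta_{i}x_{i+1}'y_{i+1}'$ is PC and $\xi_{i},\eta_{i}$ avoid all previously used vertices. The $\Theta(n)$ candidates supplied by Lemma \ref{lem3} vastly exceed the $O(\varepsilon^{2}n)$ vertices used so far (a routine rework of its proof, excluding the forbidden vertices from $X^{\star},Y^{\star}$ at the outset, makes this rigorous, using the extra slack $\varepsilon n$ in $\delta^{c}(G)$). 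Concatenating gives a PC cycle $C$ of length $4m+2m=6m\le \tfrac{2\varepsilon^{2}n}{3}$.

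For the absorbing property, fix $k\le \tfrac{2\varepsilon^{4}n}{81}$ vertex-disjoint PC odd paths $Q_{1},\dots,Q_{k}$ in $G\setminus C$. Each $Q_{i}$ has odd length, so its endpoints lie in opposite parts; writing $Q_{i}=a_{i}^{(1)}b_{i}^{(1)}\cdots a_{i}^{(\ell_{i})}b_{i}^{(\ell_{i})}$, the pair $(a_{i}^{(1)},b_{i}^{(1)})$ is a D1-element when $\ell_{i}=1$, and $(a_{i}^{(1)},b_{i}^{(1)};a_{i}^{(\ell_{i})},b_{i}^{(\ell_{i})})$ is a D2-element when $\ell_{i}\ge 2$. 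I absorb greedily: at stage $i$, pick any not-yet-used $P_{j(i)}\in\mathcal{F}$ that is absorbing for the current D-element (at least $\tfrac{2\varepsilon^{4}n}{81}-(i-1)\ge 1$ candidates remain by the uniformity of $\mathcal{F}$), and by Proposition \ref{p0} (or Definition \ref{S3D1} directly when $\ell_{i}=1$) replace $P_{j(i)}$ inside the current cycle by the enlarged PC path threading $Q_{i}$ through its middle. Because the enlarged path has the same first and last edges as $P_{j(i)}$, every PC-condition at the interface with the connectors $\xi_{j(i)-1}\eta_{j(i)-1}$ and $\xi_{j(i)}\eta_{j(i)}$ is preserved, so the cycle stays PC. After $k$ replacements I obtain the desired $C'$ with $V(C')=V(C)\cup\bigcup_{i}V(Q_{i})$.

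The chief technical hurdle is this \emph{uniformity} of $\mathcal{F}$: Lemma \ref{s3l2} as written treats a single D-element at a time, whereas I need its conclusion simultaneously for all $O(n^{4})$ of them, which is exactly why $n_{0}(\varepsilon)$ must be taken large enough for a union bound over the Chernoff estimates to close. Once that foundation is in place, the cycle-building step is routine thanks to the large surplus in Lemma \ref{lem3}, and the greedy absorption is essentially bookkeeping, since every replacement swaps one $3$-path of $\mathcal{F}$ for a longer PC path with the same pair of endpoints.
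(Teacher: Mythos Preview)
Your proposal is correct and follows essentially the same route as the paper: obtain $\mathcal{F}$ from Lemmas~\ref{s3l1}--\ref{s3l2}, link its $3$-paths cyclically into a PC cycle using the connectors of Lemma~\ref{lem3}, and then absorb each $Q_i$ by replacing a distinct $3$-path of $\mathcal{F}$ with its enlargement via Definition~\ref{S3D1} or Proposition~\ref{p0}. The only cosmetic differences are that the paper applies Lemma~\ref{lem3} directly to the restricted subgraph $G_i=G[V(G)\setminus S_i]$ (which still satisfies $\delta^c\ge \tfrac{2}{3}n+2$) rather than reworking the proof to exclude forbidden vertices, and it phrases the final absorption as a Hall-matching in the auxiliary bipartite graph $(\mathcal{R},\mathcal{F})$ instead of your equivalent greedy selection; your explicit remark about the union bound over all D-elements is in fact more careful than the paper's own presentation of that point.
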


\begin{proof}
Without loss of generality, we may assume that $\varepsilon< 2^{-3}$.
Set $\gamma=2^4\varepsilon^2/9$.
Choose $n_0(\varepsilon)$ large enough such that Lemma \ref{s3l2} holds and
such that $n_0(\varepsilon)\geq \frac{243}{4\varepsilon^3}$.
Now let $n_0=n_0(\varepsilon)$ and suppose that $n\geq n_0$.
Let $(x,y)$ be a D1-element and let $(x_1,y_1:x_2,y_2)$ be a D2-element.
From Lemma \ref{s3l1}, we have
$|\mathcal{P}(x,y)|\geq \gamma n^4$ and $|\mathcal{P}(x_1,y_1;x_2,y_2)|\geq \gamma n^4$.
From Lemma \ref{s3l2}, there exists a family $\mathcal{F}$ of vertex-disjoint PC $3$-paths
such that
\begin{equation}\label{s3equ1}
|\mathcal{F}|\leq 2^{-4}\gamma n=\frac{\varepsilon^ 2 n}{9},
\end{equation}
\begin{equation}\label{s3equ2}
|\mathcal{F}\cap \mathcal{P}(x,y)|\geq 2^{-7}\gamma^2 n=\frac{2\varepsilon^4n}{81},
\end{equation}
and
\begin{equation}\label{s3equ3}
|\mathcal{F}\cap \mathcal{P}(x_1,y_1;x_2,y_2)|\geq 2^{-7}\gamma^2 n=\frac{2\varepsilon^4n}{81}.
\end{equation}

Next we find a desired PC cycle $C$ in $G=K^c_{n,n}$.
Suppose that $|\mathcal{F}|=f$.
Let $F_1,F_2,\ldots,F_{f}$ be the PC 3-paths of $\mathcal{F}$.
For convenience, we suppose that
$$F_i=x_{2i-1}y_{2i-1}x_{2i}y_{2i},~ i\in\{1,2,\ldots,f\},$$
where $x_j\in X$ and $y_j\in Y$ for $j=1,2,\ldots, 2f$.
Moreover, we assume that $x_{2f+1}=x_1$ and $y_{2f+1}=y_1$.

We next construct a matching $\{x^{(i)}y^{(i)}:i=1,2,\ldots,f\}$
of $G\setminus \mathcal{F}$ with a desired property
by the following procedure of $l$ iterations, which generates
a sequence of matchings $M_1,M_2,\ldots, M_f$ with
$M_i=\{x^{(1)}y^{(1)}, x^{(2)}y^{(2)}, \ldots, x^{(i)}y^{(i)}\}$, $i=1,2,\ldots,f$.

{\bf Step 1:} Initially set $M_0:=\emptyset$ and $i:=1$.

{\bf Step 2:} Set $S_i:=V(\mathcal{F})\cup V(M_{i-1})\setminus \{x_{2i}, y_{2i}, x_{2i+1}, y_{2i+1}\}$
and set $G_i:=G[V(G)\setminus S_i]$.
Then pick an edge $x^{(i)}y^{(i)}\in E(G_i)$ such that
$x_{2i}y_{2i}x^{(i)}y^{(i)}x_{2i+1}y_{2i+1}$ is a PC path in $G_i$.
Set $M_i:=M_{i-1}\cup \{x^{(i)}y^{(i)}\}$.

{\bf Step 3:} If $i<f$, then set $i:=i+1$, return to Step 2.
If $i=f$, then stop the procedure.

In Step 2 of the above procedure, we have $|S_i|\leq 4f+2i\leq 6f\leq \frac{2\varepsilon^2 n}{3}$,
where the last inequality follows from (\ref{s3equ1}).
This means that $G_i$ is an edge-colored complete balanced bipartite graph with
$$\delta^c(G_i)\geq (\frac{2}{3}+\varepsilon)n- \frac{\varepsilon^2 n}{3}>
\frac{2}{3}n+2.$$
By Lemma \ref{lem3}, there exists an edge $x^{(i)}y^{(i)}\in E(G_i)$ such that
$x_{2i}y_{2i}x^{(i)}y^{(i)}x_{2i+1}y_{2i+1}$ is a PC path in $G_i$.
Consequently, the above procedure correctly generates a matching
$M_f=\{x^{(i)}y^{(i)}:i=1,2,\ldots,f\}$
of $G\setminus \mathcal{F}$.

The implementation of the above procedure also implies that
$x_{2i}y_{2i}x^{(i)}y^{(i)}x_{2i+1}y_{2i+1}$ is a PC path in $G_i\subset G$
for $i=1,2,\ldots,f$.
Thus,
$$C:=x_{1}y_{1}x_{2}y_{2}x^{(1)}y^{(1)}x_{3}y_{3}x_{4}y_{4}x^{(2)}y^{(2)}\cdots x_{2f}y_{2f}x^{(f)}y^{(f)}x_1$$
is a PC cycle in $G$ containing all the PC 3-paths of ${\cal F}$.
Note that $|C|=6f\leq \frac{2\varepsilon^2 n}{3}$, which is just what we desire.

Suppose that $\mathcal{R}$ is a family of vertex-disjoint PC odd paths in $G\setminus C$
such that $|\mathcal{R}|\leq \frac{2\varepsilon^4n}{81}$.
Recall that $\mathcal{F}=\{F_1,F_2,\ldots, F_f\}$
and $C=F_1x^{(1)}y^{(1)}F_2x^{(2)}y^{(2)}\ldots F_fx^{(f)}y^{(f)}x_1$.
We next consider a bipartite graph $G^*$ with bipartition
$(\mathcal{R},\mathcal{F})$.
For $Q\in \mathcal{R}$ and $F\in \mathcal{F}$, we define $QF$ as an edge of $G^*$
if and only if one of the following two events occurs:
(i) $Q=xy$ with $(x,y)$ being a D1-element and $F\in {\cal P}(x,y)$ is an absorbing path for $(x,y)$, and
(ii) $Q=x'y'\cdots x''y''$ with $(x',y';x'',y'')$ being a D2-element
and $F\in {\cal P}(x',y';x'',y'')$ is an
absorbing path for $(x',y';x'',y'')$.
From (\ref{s3equ2}) and (\ref{s3equ3}),
each vertex $Q\in \mathcal{R}$
has a degree of at least $\frac{2\varepsilon^4n}{81}\geq |\mathcal{R}|$.
By using Hall's theorem (see \cite{BM}, page 419),
there is a matching $M^*$ of $G^*$ that covers all the vertices of $\mathcal{R}$.

We now generate  a new cycle $C'$ of $G$ from $C=F_1x^{(1)}y^{(1)}F_2x^{(2)}y^{(2)}\ldots F_fx^{(f)}y^{(f)}x_1$ in the following way.
Let $\mathcal{R}=\{Q_1,Q_2,\ldots,Q_k\}$ and suppose that
$M^*=\{Q_iF_{i'}:i=1,2,\ldots,k\}$.
For each $i=1,2,\ldots, k$,
if $Q_i=xy$ for some D1-element $(x,y)$,
then replace the path $F_{i'}=x_{2i'-1}y_{2i'-1}x_{2i'}y_{2i'}$
in $C$ by the new path $Q'_i=x_{2i'-1}y_{2i'-1}xyx_{2i'}y_{2i'}$
containing $V(Q_i)\cup V(F_{i'})$;
and if $Q_i=x'y'\cdots x''y''$ for some D2-element
$(x',y';x'',y'')$,
then replace the path $F_{i'}=x_{2i'-1}y_{2i'-1}x_{2i'}y_{2i'}$
in $C$ by the new path $Q'_i=x_{2i'-1}y_{2i'-1}x'y'\cdots x''y''x_{2i'}y_{2i'}$
containing $V(Q_i)\cup V(F_{i'})$.

From the construction of $C'$, we may find that $C'$ is a cycle
of $G$ containing $V(C)$ and all the paths of ${\cal R}$.
Since $M^*=\{Q_iF_{i'}:i=1,2,\ldots,k\}$
is a matching of $G^*$, for each $i\in \{1,2,\ldots,k\}$,
the new path $Q'_i$ is a PC path.
Consequently, $C'$ is a PC cycle with $V(C')=V(C)\cup \bigcup_{Q\in {\mathcal{R}}}(V(Q))$.
The lemma follows.
\end{proof}

We are ready for the last step to prove Theorem \ref{t2}.
Recall that $G=K^c_{n,n}$,  $\delta^c(G)\geq (\frac{2}{3}+\varepsilon)n$,
and $(X,Y)$ is the bipartition of $G$.
Without loss of generality, we may suppose that $\varepsilon< 2^{-6}$,
$u=x_1\in X$, and $n_0=n_0(\varepsilon)\geq \frac{243}{\varepsilon^5}$.
We need to show that there exists a PC $(2l)$-cycle containing $u$
for each integer $l$ with $2\leq l\leq n$.

From Lemma \ref{h1},
there exists a PC $4$-cycle containing $u$.
For each integer $l$ with  $3\leq l\leq \frac{2}{3}\varepsilon n$,
since $\delta^c(G)\geq (\frac{2}{3}+\varepsilon)n$,
we can greedily find a PC path $P:=x_1y_1x_2y_2\cdots x_{l-1}y_{l-1}$.
Set $S:=\{x_2,y_2,x_3,y_3,\cdots, x_{l-2},y_{l-2}\}$
and $G':=G[V(G)\setminus S]$.
Note that
$$\delta^c(G')\geq (\frac{2}{3}+\varepsilon)n-\frac{2}{3}\varepsilon n \geq       (\frac{2}{3}+\frac{1}{3}\varepsilon)n\geq \frac{2}{3}|G'|+2 . $$
From Lemma \ref{lem3},
there exists an edge $xy$ of $G'$ with $x,y\in V(G')\setminus \{x_1,y_1,x_{l-1},y_{l-1}\}$
such that $y_1x_1yx y_{l-1}x_{l-1}$ is a PC path.
Hence, the cycle $x_1y_1x_2y_2\cdots x_{l-1}y_{l-1}xyx_1$ is a PC $(2l)$-cycle
containing $u$.

For each $ l\geq \frac{2}{3}\varepsilon n$,
we may suppose that $C$ is a PC $k$-cycle given by Lemma \ref{s3l4}.
Set $H:=G\setminus C$.
Then $H$ is an edge-colored complete balanced bipartite graph such that
$$\delta^c(H)\geq
(\frac{2}{3}+\varepsilon)n- \frac{\varepsilon^2 n}{3}\geq
(\frac{2}{3}+\frac{2\varepsilon}{3})n\geq \frac{2}{3}n+\frac{\varepsilon}{3}|H|.$$
Since $n_0\geq \frac{243}{\varepsilon^5}$
and $\varepsilon< 2^{-6}$, we have $\frac{\varepsilon}{3}|H|\geq \frac{\varepsilon}{3}(2n-\frac{2}{3}\varepsilon^2 n)\geq 3$.
By Corollary \ref{coro2},
there exists a family $\mathcal{F}$ of vertex-disjoint PC odd paths
with $|\mathcal{F}|\leq\lceil\frac{|H|}{\frac{\varepsilon}{3}|H|} \rceil =\lceil\frac{3}{\varepsilon}\rceil$
such that $V(H)=\bigcup_{P\in \mathcal{F}}V(P)$.
Set $\mathcal{F}=\{P_1,P_2,\ldots,P_x\}$.
If $u\in V(C)$,
then let $\mathcal{F'}=\{P'_1,P'_2,\ldots,P'_x\}$
be a family of  vertex-disjoint PC odd paths
obtained from $\mathcal{F}$ by removing some vertices in the paths of $\mathcal{F}$
such that $|\bigcup_{i\in[x]} V(P'_i)|=2l-k$.
If $u\notin V(C)$,
then we have $u\in V(\mathcal{F})$.
Since $2l\geq\frac{4}{3}\varepsilon n >\frac{2}{3}\varepsilon^2 n+2$,
we may suppose that  $\mathcal{F'}=\{P'_1,P'_2,\ldots,P'_x\}$
is  a family of  vertex-disjoint PC odd paths
obtained from $\mathcal{F}$ by removing some vertices in the paths of $\mathcal{F}$
such that $|\bigcup_{i\in[x]} V(P'_i)|=2l-k$ and $u\in \bigcup_{i\in[x]} V(P'_i)$.

Since $\lceil\frac{3}{\varepsilon}\rceil\leq\frac{2\varepsilon^4n}{81}$,
by Lemma \ref{s3l4},
there exists a PC cycle $C'$ with $V(C')=V(C)\cup\bigcup_{i\in[x]} V(P'_i)$
containing vertex $u$.
As a result, $C'$ is a PC $(2l)$-cycle containing $u$.
This completes the proof of Theorem \ref{t2}.

\section*{Acknowledgment}
This research was supported in part by the National Natural Science Foundation of China under grant numbers 11971445 and 12171440.

\end{document}